\newcommand{\de}{\mathrm{d}}
\newcommand{\R}{\mathbb{R}}
\newcommand{\N}{\mathbb{N}}
\newcommand{\C}{\mathbb{C}}
\newcommand{\spaceV}{\mathscr{V}}
\newcommand{\spaceVr}{\mathscr{V}_{r}}
\renewcommand{\P}{\mathscr{P}}
\renewcommand{\H}{\mathscr{H}}
\newcommand{\Mass}{\mathbb{M}}
\newcommand{\forms}[2]{\mathscr{P}_{#1}\Lambda^{#2}}
\newcommand{\testforms}{\mathscr{D}_0^k}
\newcommand{\currents}{\mathscr{M}_0^k}
\newcommand{\averaging}{\mathscr{A}^k}
\newcommand{\leb}{\mathcal{L}}
\renewcommand{\L}{\mathscr{L}}
\newcommand{\markov}{C_M}
\newcommand{\mesh}{admissible integral $k$-mesh }
\DeclareMathOperator*{\Span}{span}
\DeclareMathOperator*{\support}{supp}
\DeclareMathOperator*{\Vol}{Vol}
\DeclareMathOperator*{\argmin}{argmin}
\DeclareMathOperator*{\argmax}{argmax}
\DeclareMathOperator*{\ddim}{dim}
\DeclareMathOperator*{\Card}{Card}
\DeclareMathOperator*{\Rank}{Rank}
\DeclareMathOperator*{\interior}{int}
\DeclareMathOperator*{\arcos}{arcos}
\DeclareMathOperator*{\dist}{dist}
\DeclareMathOperator*{\diam}{diam}
\DeclareMathOperator*{\extr}{Extr}
\DeclareMathOperator*{\vdm}{vdm}
\newcommand{\opnorm}[1]{\Vert #1 \Vert_{\mathrm{op}}}
\newcommand{\monomials}[1]{\mathcal{B}^{\mathrm{mon}}_{#1}}
\newcommand{\kmonomials}[2]{\mathcal{B}^{\mathrm{mon},#2}_{#1}}
\newcommand{\homonomials}[1]{\mathcal{B}^{\mathrm{hom}}_{#1}}
\newcommand{\khomonomials}[2]{\mathcal{B}^{\mathrm{hom},#2}_{#1}}
\newcommand{\haus}{\mathcal{H}}
\newcommand{\myqed}{$ $}
\newcommand{\e}{\boldsymbol{e}}
\newtheorem{theorem}{Theorem}
\newtheorem{proposition}{Proposition}
\newtheorem{lemma}{Lemma}
\newtheorem{corollary}{Corollary}
\theoremstyle{definition}
\newtheorem{definition}{Definition}
\theoremstyle{remark}
\newtheorem{remark}{Remark}
\newtheorem{example}{Example}
\title[Sampling, approximation, and interpolation of differential forms]{Sampling, approximation, and interpolation of differential forms by admissible integral $k$-meshes}
\author{Ludovico Bruni Bruno}
\address{Dipartimento di Matematica \textquotedblleft Tullio Levi-Civita\textquotedblright, Università di Padova, via Trieste, 63, Padova, 35131, Italia \\
              Istituto Nazionale di Alta Matematica \textquotedblleft Francesco Severi\textquotedblright, Piazzale Aldo Moro, 5, Roma, 00185, Italia}
\email{ludovico.brunibruno@unipd.it}
\author{Federico Piazzon}
\address{Dipartimento di Matematica \textquotedblleft Tullio Levi-Civita\textquotedblright, Università di Padova, via Trieste, 63, Padova, 35131, Italia}
\email{fpiazzon@math.unipd.it}
\begin{document}
\maketitle

\begin{abstract}
    In this work we introduce the concept of admissible integral $k$-mesh for sampling differential forms with contiuous coefficients on a real body $E\subset \R^n$, and provide two techniques for the construction of admissible integral $k$-meshes on real bodies enjoying the Markov or the Bernstein inequality. Admissible integral $k$-meshes allow for the construction of robust approximation schemes, and are used to extract interpolation sets with high stability properties. To this end, the concepts of Fekete currents and Leja sequences of currents are formalized, and a numerical scheme for their approximation is proposed.
\end{abstract}

\section{Introduction}

Approximation of differential forms is an abstract framework that unifies many approximation procedures under the same perspective. Among them, we mention nodal interpolation \cite{Davis}, histopolation \cite{BruniErb23}, fitting \cite{Bjork} and approximation of vector fields \cite{ABRFace}; in fact, all the just mentioned procedures involve realizations of different orders of the differential forms. Obviously,  numerical methods for the resolution of partial differential equations based on such kind of approximations carry a formulation in the language of differential forms as well \cite{Hiemstra}. Further, this approach allows to gather many a priori different situations under the same perspective \cite{ArnoldFalkWintherActa}.

A formal framework for the approximation of differential forms has been proposed in \cite{BrPi25}, where the concept of Lebesgue constant defined for the interpolation of Whitney forms \cite{AlonsoRapettiLebesgue} was extended to generic differential forms and related with the common nodal concept. In \cite{BrPi25} the authors work in the space of differential forms with continuous coefficients over a real body $E$ (i.e., the closure of a bounded domain in $\R^n$) endowed by the $0$-norm $\|\cdot\|_0$, see \eqref{0normdef}. This choice is rather natural, as this norm both can be seen as a generalization of the uniform norm of functions, and it turns the space of differential $k$-forms with continuous coefficients into a Banach space, hereafter denoted by $\testforms(E)$. Perhaps even more importantly, the $0$-norm is defined by integration, the most natural operation on differential forms. This fact strongly justifies such a choice, as it makes the $0$-norm particularly tailored for working with the type of sampling operators considered both in \cite{BrPi25} and in the present work: normalized $k$-rectifiable currents with constant multiplicity and finite mass \cite{Krantz}. These functionals are, roughly speaking, integrals over oriented sets of constant measure theoretic dimension $k$, divided by the $k$-dimensional volume of the support of integration. We refer the reader to Section \ref{sec:2.2} below for the precise definitions of these mathematical objects.

The idea of the present work is to bridge this theoretical framework with a mathematical technology recently developed in multivariate polynomial approximation, namely \emph{polynomial admissible meshes}. 
Admissible meshes have been first introduced in \cite{CalviLevenberg08} to investigate the uniform convergence of discrete least squares approximation of continuous functions on a compact set by algebraic polynomials. Intuitively, the definition of admissible meshes is given by pairing a sampling inequality, somehow reminiscent of Marcinkiewicz–Zygmund inequality, with a control on the cardinality of the considered norming sets. Precisely, a sequence $\{\mathcal T^{(r)}\}_{r\in \N}$ of finite subsets of $E$ is termed admissible mesh for $E$ if $r\mapsto \Card \mathcal T^{(r)}$ has sub-exponential growth, and there exists $C\in ]0,+\infty[$ such that 
$$\max_{x\in E}|p(x)|\leq C\max_{y\in \mathcal T^{(r)}}|p(y)|,\;\;\forall p \in \mathscr P_r(E),$$
where $\mathscr P_r(E)$ denotes the space of algebraic polynomial of degree at most $r$ restricted to $E$. 

Admissible polynomial meshes are of leading interest in the construction of approximation strategies such as interpolation, discrete least squares, and approximate optimal designs.  Namely, they found significant applications 
in the construction of quasi-optimal interpolation arrays  \cite{BosDeMarchiSommarivaVianello10,BosCalviLevenbergSommarivaVianello11}, in the analysis of asymptotic behavior of multivariate orthogonal polynomials and pluripotential theory \cite{Piazzon19}, in the field of quadrature and optimal experimental designs \cite{PiazzonSommarivaVianello17,BosPiazzonVianello20}, and polynomial optimization \cite{PiazzonVianello19}. 
For, the algorithmic construction of admissible polynomial meshes has been extensively studied, borrowing various techniques and tools from 
classical polynomial inequalities, convex geometry, and pluripotential theory 
\cite{Kroo11,Kroo19,Piazzon16,PiazzonVianello14}. The aim  of the present work is to extend this approach to the approximation of differential forms.

The first step in our construction is to introduce \emph{admissible integral $k$-meshes}. This is done in Section \ref{sec:4}, where, to keep the presentation fairly general, we start from a given increasing dense sequence $\{\spaceVr\}_{r\in \N}$ of subspaces of $\testforms(E)$, relevant examples of such subspaces  (see Subsection \ref{sect:polydf}) are polynomial differential forms, and trimmed polynomial differential forms. These spaces have been considered in finite element methods, see \cite{ArnoldFalkWintherActa} and references therein. In order to define admissible integral $k$-meshes, we first need to generalize the concept of mesh itself to the context of differential form and to adapt it to the use of $\|\cdot\|_0$. This leads to the introduction of \emph{integral $k$-mesh}, defined as a sequence $\{\mathcal T^{(r)}\}_{r\in \N}$ of finite collections $\mathcal T^{(r)}=\{T^{(s,r)}\}_{s=1}^{M(r)}$ of linear continuous functionals on $\testforms(E)$ that are representable by normalized integration along a compact piece of an affine variety of dimension $k$, see Definition \ref{def:integralkmesh}. Note that we can associate to any integral $k$-mesh a family of semi-norms $\|\omega\|_{\mathcal T^{(r)}}:=\max_{s=1,\dots,M(r)}|T^{s,r}(\omega)|$, so we can introduce \emph{admissible integral $k$-meshes} by analogy with admissible polynomial meshes. Precisely, we say that the integral $k$-mesh $\{\mathcal T^{(r)}\}_{r\in \N}$ is admissible for $\{\spaceVr\}_{r\in \N}$ if the sampling inequality 
\begin{equation}\label{eq:samplingproperty}
\|\omega\|_0\leq C\|\omega\|_{\mathcal T^{(r)}},\;\;\forall \omega\in \spaceVr
\end{equation}
holds for any $r\in \N$ and a constant $C$ not depending on $r$, and if the cardinality of $\mathcal T^{(r)}$ has sub-exponential growth, i.e., $\limsup_r[\Card \mathcal T^{(r)}]^{\frac 1 r}\leq 1.$

Section \ref{sec:construction} is entirely devoted to the construction of admissible integral $k$-meshes for the case of $\spaceVr=\forms{r}{k},$ the space of differential $k$-forms with polynomial coefficients of degree at most $r$. This task is accomplished by  extending the techniques used for the case $k=0$, i.e., polynomial admissible meshes. This generalization is non-trivial, as the geometric setting and the use of an integral norm and integral functionals cause a number of issues.

The constructions we present rely on classical multivariate polynomial inequalities of Markov (see, e.g., \cite{PaPl86}) and Bernstein \cite{Miro92} type. In Theorem \ref{FE-CM} we exploit Markov Inequality to provide a  sufficient condition for a finite collection of linear functionals $\mathcal T=\{T^1,\dots,T^M\}$ (of the same type as in the definition of integral $k$-meshes) to enjoy a sampling inequality as \eqref{eq:samplingproperty}. This result allows for an algorithmic construction of a $\forms{r}{k}$-admissible integral $k$-mesh for any convex body, see Proposition \ref{prop:convmesh}. It is worth noticing that Theorem \ref{FE-CM} may be generalized to the non-convex setting following the ideas of \cite{CalviLevenberg08}, provided $E$ still enjoys a Markov Inequality, this would extend the results of Proposition \ref{prop:convmesh} to a fairly general class of real bodies.  
In contrast, the use of Markov Inequality, as it does in the case $k=0$, would typically produce $\forms{r}{k}$-admissible integral $k$-meshes with very rapidly increasing cardinality as $r$ grows large. This makes the practical use of such admissible integral $k$-meshes feasible only for mild values of $r$.

To partially overcome such an issue, we also present a construction relying on the multivariate version of the classical Bernstein Inequality, namely the Baran Inequality \cite{Miro92,Miro94}. In Theorem \ref{th:FEB} we provide a sufficient condition for the sampling inequality  \eqref{eq:samplingproperty} to hold, expressed in terms of the so-called Baran distance. This distance is constructed in \cite{LenNormShayne04} via Carnot-Caratheodory construction starting from the Baran metric, which in general is just a Finsler metric. We point out that the Baran metric of a given generic convex body is not a priori known, and is not in general Riemannian. These facts significantly restrict the field of applicability of Theorem \ref{th:FEB}. However, when Theorem \ref{th:FEB} can be used instead of Theorem \ref{FE-CM} for constructing  $\forms{r}{k}$-admissible integral $k$-meshes, the resulting cardinality dramatically drops: intuitively speaking the cardinality of a $\forms{r}{k}$-admissible integral $k$-mesh constructed by Baran Inequality grows as the square root of the cardinality of the one constructed relying on the Markov Inequality. Remarkably, relevant sets as balls, cubes, and simplexes fall into the category for which Theorem \ref{th:FEB} is applicable, \cite{Pi19}.  In Subsection \ref{subsec:simplex} we present an algorithmic construction of an integral $k$-mesh for the $2$-simplex and the $3$-simplex (see Algorithm \ref{alg:meshthesimplex}). Exploiting Theorem \ref{th:FEB} and the explicit formula for the Baran distance in the simplex, we are able to prove that the integral $k$-mesh computed by Algorithm 1 is indeed $\forms{r}{k}$-admissible, see Proposition \ref{prop:provealg}.

The last two sections of the manuscript are concerned with applications of the theory developed above to the framework of approximation and interpolation. In Section \ref{sec:5} we consider the generalized weighted least squares approximation over admissible integral $k$-meshes. Namely, we study the sequence of projection operators $P^{(r)}:\testforms\rightarrow \spaceVr$ defined by 
$$P^{(r)}(\omega):=\argmin_{\theta\in \spaceVr}\sum_{s=1}^{M(r)}w^{(s,r)}|T^{(s,r)}(\omega-\theta)|^2,$$
where $w^{(s,r)}$ are positive weights, $\{\spaceV_r\}_{r\in\N}$ is a dense sequence of subspaces of $\testforms(E)$, and $\{\mathcal T^{(r)}\}_{r\in \N},$ $\mathcal T^{r}=\{T^{(s,r)}\}_{s=1}^{M(r)}$ is a $\{\spaceV_r\}_{r\in\N}$-admissible integral $k$-mesh. Let us recall that the Lebesgue constant $\L^{(r)}$ of this approximation scheme has been defined in \cite{BrPi25}, where the authors also show that $\L^{(r)}$ always dominates the operator norm of $P^{(r)}$, thus $\L^{(r)}$ measures both the stability and the effectiveness (via the Lebesgue Inequality) of the approximation procedure. We state in Proposition \ref{prop:errorestimatefittingmesh} an upper bound for  $\L^{(r)}$, and the classical error bound for $\|\omega-P^{(r)}\omega\|_0$ follows. Following \cite{CalviLevenberg08} and exploiting the sampling property of admissible integral $k$-meshes, it is possible to sharpen this result and obtain a revisited error estimate, see Proposition \ref{corrollaryfitting}.

In Section \ref{sec:interp} we consider interpolation of differential forms by differential forms with polynomial coefficients. We adapt the definitions of Fekete arrays and Leja sequences to the context of generalized interpolation by means of the operators $\Pi^{(r)}:\testforms(E) \rightarrow \forms{r}{k}$ implicitly defined by
$$T^{(s,r)}(\Pi^{(r)}\omega)=T^{(s,r)}(\omega),\;s=1,2,\dots, N(r):=\ddim\forms{r}{k},$$
where $\{T^{(s,r)}\}_{s=1}^{N(r)}$ is a \emph{unisolvent} set of currents. We point out that being unisolvent is by itself a remarkable property of a sets of currents, while Fekete arrays of currents not only are unisolvent, they also have a Lebesgue constant that grows at most as $\ddim \forms{r}{k}$. The construction of Fekete arrays (or Leja sequences) is a completely unfeasible task, being the complexity of the problem exponentially increasing as $r\to +\infty$. In the context of nodal interpolation of functions admissible polynomial meshes have been successfully used in order to discretize the continuous optimization problems that defines Fekete arrays and Leja sequences. We extend this approach to the framework of differential $k$-forms and admissible integral $k$-meshes: Theorem \ref{th:lebesgueforAFP} asserts that the Fekete arrays $\mathcal F^{(r)}=\{F^{1,r},\dots,F^{(N(r),r)}\}$ extracted from an admissible integral $k$-mesh still has a Lebesgue constant enjoying
$$\L^{(r)}\leq C N(r)\,,$$
where $C$ is the constant appearing in \eqref{eq:samplingproperty}. This is the first example in the literature of a generalized interpolation scheme for differential forms relying of integration functionals and having a Lebesgue constant with provable \emph{polynomial growth}. 

We stress that the algorithms AFP (Approximate Fekete Points) and DLS (Discrete Leje Sequences) \cite{BosDeMarchiSommarivaVianello10,BosCalviLevenbergSommarivaVianello11}, proposed in the nodal framework (i.e. $k=0$) for the extraction of Fekete or Leja points from an admissible mesh, do not need any modification to be applicable in the framewok of the present study. Indeed, starting from an integral admissible $k$-mesh, \emph{good} (i.e., with slowly increasing Lebesgue constant) unisolvent sets of currents can be extracted from it in polynomial time by directly applying the AFP algorithm.

\section{Admissible integral $k$-meshes}\label{sec:4}
The aim of this present section is to introduce the main mathematical object the present paper is concerned on, that is admissible integral $k$-meshes. To this aim, we first need to briefly recall few definitions and results regarding exterior algebra, differential forms, and currents. This will also allow us to fix the notation we will use later on.  
\subsection{Background and notation}\label{sec:2.2}
We use the symbol $\Lambda_k$ for $k$-vectors, i.e.,  the $k$-th exterior power of $\R^n$. This set is the $\binom{n}{k}$-dimensional real vector space spanned by all the wedge products of the form $ \e_{\alpha}: = e_{\alpha(1)} \wedge \ldots \wedge e_{\alpha(k)} $ with $\alpha$ increasing multi-index. In $\Lambda_k$ we will often use the subclass of \emph{simple} $k$-vectors, i.e., $k$-vectors $\tau$ of the form $\tau=\tau_1\wedge\tau_2\wedge\dots\wedge\tau_k$ for a suitable choice of $\tau_1,\dots,\tau_k\in \R^n$.
We endow $\Lambda_k$ with the Euclidean scalar product $(\cdot,\cdot)_{\Lambda_k}$, which is defined by declaring $\e_\alpha$'s an orthonormal basis, see, e.g., \cite[p. 16]{Bhatia}, inducing the Euclidean norm $|\cdot|_{\Lambda_k}.$ We recall that, if $\tau=\tau_1\wedge\tau_2\wedge\dots\wedge\tau_k$ and $A=[\tau_1,\tau_2,\dots,\tau_k]\in M_{n,k}(\R)$, then, by Cauchy-Binet Formula, one has
$$|\tau|_{\Lambda_k}^2=\det(A^tA)={\sum_{|\alpha|=k}}'{\det}^2(A^\alpha),$$
where $(A^\alpha)_{i,j}=A_{\alpha(i),j}$, $i,j=1,\dots,k$, and, as it is customary, the symbol $\sum_{|\alpha|=k}'$ denotes summation along increasing multi-indices of length $k$.

Similarly, $ \Lambda^k $ denotes the $k$-th exterior power of the space of linear forms on $\R^n$, the Euclidean scalar product $(\cdot,\cdot)_{\Lambda^k}$ on $\Lambda^k$ can be defined by declaring $\e^\alpha:=e_{\alpha(1)}^*\wedge\dots\wedge e_{\alpha(k)}^*$'s to be orthonormal. Note that $\Lambda^k$ is indeed the dual space of $\Lambda_k$, being $\langle\omega_1\wedge\dots\wedge\omega_k; \, v_1\wedge\dots\wedge v_k\rangle=\det([\omega_i(v_j)]_{i,j=1,\dots,k})$
the dual pairing.

On $\Lambda^k$, apart from the Euclidean norm, we consider also the \emph{comass norm} $|\cdot|^*$, defined by
\begin{equation}\label{eq:comassnorm}
|\omega|^*:=\{\langle\omega;\tau\rangle:\,\tau\in \Lambda_k\text{ simple }, |\tau|_{\Lambda_k}\leq 1\}\,.
\end{equation}

A differential $ k $-form $ \omega $ on $ \R^n $ is a mapping that associates to each $ x \in \R^n $ a $k$-covector of $ \Lambda^k $. This mathematical object is inextricably intertwined with integration, as much in mathematical theories as in physical applications. In the present paper we will always work with a rather generalized concept of integration of differential forms, only requiring that the domain of integration is a $k$-rectifiable set $S$ (see, e.g., \cite[\S 5.4]{Krantz}) of locally finite Hausdorff measure endowed by an orientation $\tau$ \cite[Def. 7.5.1 (3)]{Krantz}, i.e., any unit simple $k$-vector field $\tau=\tau_1\wedge\dots\wedge \tau_k\in \Lambda_k$ such that, for almost every $x\in S$, $\{\tau_1(x),\dots,\tau_k(x)\}$ is a basis of the approximate tangent space of $S$ at $x$ \cite[Def. 5.4.4]{Krantz}. Namely, we refer to the quantity
\begin{equation*}\label{eq:defineintegration}
\int_S \langle\omega(x);\tau(x)\rangle \de \haus^k(x)\, .
\end{equation*}
as the \emph{integral} of $\omega$ over $S$ with respect to the orientation $\tau$. Here and throughout the paper we denote by $\haus^k$ the $k$-dimensional Hausdorff measure on $\R^n$.

Let $ E $ be a \emph{real body}, i.e., the closure of a bounded domain of $\R^n$. For any $k\in \{0,1,\dots,n\}$ we denote by $\testforms(E):=\left(\mathscr C^0(E,{\Lambda}^k),\|\cdot\|_0\right)$ the Banach space of \emph{test forms} of order $k$ with continuous coefficients over $E$ endowed by the norm
\begin{equation}\label{0normdef}
\|\omega\|_0:=\sup\frac 1{\haus^k(S)}\int_S \langle\omega(x);\tau(x)\rangle \de \haus^k(x)\,,
\end{equation}
where the supremum is taken among all oriented $k$-rectifiable sets $S\subset E$ of positive finite $\haus^k$ measure. As customary, we will often refer to $\|\cdot\|_0$ as the \emph{$0$-norm}.

It is worth stressing here that, when $E$ is a real body, the $0$-norm is the natural extension of the uniform norm of function to the context of differential forms. In fact, it is not difficult to prove that it agrees with the uniform norm of the point-wise comass norm of the differential form:
\begin{equation}
\|\omega\|_0=\max_{x\in E}|\omega(x)|^*,\;\forall \omega\in \testforms(E)\,.
\end{equation}
Note that  the comass norm $|\cdot|^*$ is somehow reminiscent of the role of integration appearing in \eqref{0normdef}. Indeed, on one hand  simple $k$-vectors (used as test vectors in the definition \eqref{eq:comassnorm} of $|\cdot|^*$) are the only elements of $\Lambda_k$ that can be thought as oriented basis of a $k$-dimensional linear subvariety of $\R^n$, hence as basis of the tangent space of a $k$-dimensional smooth manifold at a point. On the other hand, when integrating a differential form on an oriented manifold (or on a $k$-rectifiable oriented set), $\omega(x)$ will contribute to the integral only through the pairing with simple $k$-vectors.

The topological dual $\currents(E):=(\testforms(E))'$ of $\testforms(E)$ is the space of \emph{currents of order zero} and dimension $k$. This space is naturally endowed by the operator norm  
\begin{equation}\label{eq:mass}
\Mass(T):=\sup\{|T(\omega)|, \; \omega\in \mathscr D_0^k(E),\;\|\omega\|_0\leq 1\},
\end{equation}
which is referred to as the \emph{mass} of the current $T$. 
%
 In what follows we will consider in $\currents(E)$ the subclass $\mathcal I^k(E)$ of \emph{currents of integration}, i.e., any current $[S,\tau]$ defined by integration over a $k$-rectifiable set $S$ of finite measure and orientation $\tau$, i.e.,
\begin{equation}\label{integralcurrent}
[S,\tau](\omega)=\int_S\langle\omega;\tau\rangle d\haus^k\,,
\end{equation} 
and the subclass $\averaging(E)$ of \emph{currents of integral averaging}
\begin{equation}
\averaging(E):=\left\{\frac{[S,\tau](\omega)}{\haus^k(S)},\;S\subset E\text{ is a }k\text{-rectifiable set of  with orientation } \tau,\;0<\haus^k(S)<+\infty \right\}
\end{equation}
Note that in particular one has $\Mass([S,\tau])=\haus^k(S)$, hence $\Mass(T)=1$ $\forall T\in \averaging(E)$ by construction. 
Let us remark that, unwinding the definitions of $\averaging(E)$ and $\|\cdot\|_0$, we get a characterization of $ \Vert \omega \Vert_0 $ in terms of averaging currents:
$$\|\omega\|_0=\sup_{T\in\averaging(E)}T(\omega),\quad \forall \omega\in \testforms(E).$$

Let $\varphi:\widehat E\rightarrow E$ is a $\mathscr C^1$ diffeomorphism. Then we can introduce the push-forward $\varphi_*\widehat T$ of a current $\widehat T\in \currents(\widehat E)$ by setting $\varphi_*\widehat T(\omega)=T(\varphi^*\omega)$, $\forall \omega\in \testforms(E)$, where $\varphi^*:\testforms(E)\rightarrow \testforms(\widehat E)$ is the pull-back of differential forms. It is worth noticing here that the class $\averaging(E)$ is not stable under push-forward operations \cite{BrPi25}, since the mass of a current may vary under such operation.  

\subsection{Introducing admissible integral $k$-meshes}\label{sec:4.1}

The classical theory of admissible meshes has been designed around the uniform norm and considering point-wise evaluation functionals as sampling operators. In the present subsection, drawing inspiration from the nodal case, we generalize the definition of admissible polynomial meshes to all cases $k\in\{1,\dots,n\}$. As one may expect, the most impactful changes concern the replacement of the uniform norm by $\|\cdot\|_0$, and the use of integral averages as sampling operators. 

Let us start generalizing the concept of mesh to the setting of differential $k$-forms: the idea is to consider sequences of finite collections of currents of integral averaging, each current being supported on a compact piece (lying in $E$) of an affine variety of dimension $k$. Formally, we introduce the following:
\begin{definition}[Integral $k$-mesh]\label{def:integralkmesh}
Let $E\subset \R^n$ be a compact set with non-empty interior and let $k\in \N$. Let, for any $r\in \N$ and any $s=1,2,\dots, M(r)<+\infty$, $x^{(s,r)}\in E$, $\Omega^{(s,r)}\subset\R^k$ a compact set with $0<\leb^k(\Omega^{(s,r)})$, and $A^{(s,r)}=[A_{:,1}^{(s,r)},\dots, A_{:,k}^{(s,r)}]\in M_{n,k}(\R)$, $\Rank(A^{(s,r)})=k$ such that
$$S^{(s,r)}:=\{x^{(s,r)}+A^{(s,r)}y,y\in \Omega^{(s,r)}\}\subset E.$$
Let, for any $r\in \N$,
$$\mathcal T^{(r)}:=\{T^{(1,r)},\dots,T^{(M(r),r)}\},\;\;\;$$
where, for $s=1,2,\dots, M(r)$,
$$T^{(s,r)}:= \frac{[S^{(s,r)},\sigma^{(s,r)}]}{\haus^k(S^{(s,r)})} \quad \text{and} \quad \sigma^{(s,r)}:=\frac{A_{:,1}^{(s,r)}\wedge\dots\wedge A_{:,k}^{(s,r)}}{|A_{:,1}^{(s,r)}\wedge\dots\wedge A_{:,k}^{(s,r)}|}\,.$$
Then the sequence $\{\mathcal{T}^{(r)}\}_{r\in \N}$ of finite subsets of $\averaging(E)$ is termed \emph{integral $k$-mesh}.
\end{definition}
By a slight abuse of notation and terminology, we can identify $\mathcal T^{(r)}$ with the set of oriented supports $\{(x^{1,r},A^{(1,r)},\Omega^{(1,r)}),\cdots,(x^{M(r),r},A^{(M(r),r)},\Omega^{(M(r),r)})\}$, and term the latter \emph{integral $k$-mesh} as well. 
\begin{remark}
We decided to include in the definition \label{def:integralkmesh} the requirement of $S^{(s,r)}$ being a piece of an affine variety. This is mainly motivated from the perspective of applications and simplicity of coding. In particular, in this setting one has 
\begin{align*}
T^{(s,r)}(\omega)=&  \frac 1{\int_{\Omega^{(s,r)}}|A_{:,1}^{(s,r)}\wedge\dots\wedge A_{:,k}^{(s,r)}|\de y}\int_{\Omega^{(s,r)}}\langle\omega(x^{(s,r)}+A^{(s,r)}y);\sigma^{(s,r)}\rangle|A_{:,1}^{(s,r)}\wedge\dots\wedge A_{:,k}^{(s,r)}| \de y\\
=&  \frac 1{\leb^k(\Omega^{(s,r)})}\int_{\Omega^{(s,r)}}\langle\omega(x^{(s,r)}+A^{(s,r)}y);\sigma^{(s,r)}\rangle \de y\,.
\end{align*}
\end{remark}

The sequence $\{\mathcal{T}^{(r)}\}_{r\in \N}$ naturally induces a sequence of seminorms on $\mathscr D^k_0(E)$ defined by
$$\|\omega\|_{\mathcal{T}^{(r)}} :=\max_{s\in\{1,\dots, M(r)\}}|T^{(s,r)}(\omega)|\;,\;\;\forall \omega\in \mathscr D_0^k(E)\,,$$
which is used to extend the definition of admissible polynomial meshes to differential forms.
\begin{definition}[(Weakly-)\mesh]\label{def:admissiblemeshes}
Let $\{\mathscr{V}_{(r)}\}_{r\in \N}$ be an increasing sequence of linear subspaces of $\testforms(E).$ Let $\{\mathcal{T}^{(r)}\}_{r\in \N}$ be an integral $k$-mesh for the compact set $E\subset\R^n$. Then $\{\mathcal{T}^{(r)}\}_{r\in \N}$ is termed a $\{\mathscr{V}^{(r)}\}_{r\in \N}$-\emph{\mesh} if, in the above notation, 
\begin{align}
&\limsup_{r\to +\infty}\left[\Card(\mathcal{T}^{(r)})\right]^{1/r}\leq1 
\notag\\
&C:=\sup_{r\in \N}\;\sup\left\{\frac{\|\omega\|_0}{\|\omega\|_{\mathcal{T}^{(r)}}}, \ \omega\in \mathscr{V}^{(r)}\setminus\{0\}\right\}<+\infty\,.\label{constantofmeshHk}
\end{align}
If the property \eqref{constantofmeshHk} is replaced by the weaker assumption
\begin{equation*}
\limsup_rC_r^{1/r}:=\limsup_r\left( \sup\left\{\frac{\|\omega\|_0}{\|\omega\|_{\mathcal{T}^{(r)}}}, \ \omega\in \mathscr{V}^{(r)}\setminus\{0\}\right\} \right)^{1/r}\leq 1\,,
\end{equation*}
then $\{\mathcal{T}^{(r)}\}_{r\in \N}$ is termed a $\{\mathscr{V}^{(r)}\}_{r\in \N}$-\emph{weakly admissible integral $k$-mesh}.
\end{definition}

In the following Lemma \ref{lem:basicproperties} we collect some basic properties of integral $k$-meshes, which we invite the reader to compare with \cite[\S 4]{BlChLe92}. We point out that some of the features of weakly admissible ($0$-)meshes do not have natural counterparts in the context of integral $k$-meshes with $k>0$.

\begin{lemma}\label{lem:basicproperties} The following properties of admissible integral $k$-meshes hold true.\\
\underline{Subspace}: if $\{\mathcal T^{(r)}\}$ is a $\{\mathscr V^r\}$-\mesh and $\mathscr W^r\subset \mathscr V^r$ for any $r$, then  $\{\mathcal T^{(r)}\}$ is a $\{\mathscr W^r\}$-\mesh as well. \\
\underline{Finite unions}: let $E=\cup_{i=1}^m E_i$ and denote by $\mathscr{V}^{(r)}_i$ the space of the restrictions of the elements of $\mathscr{V}^{(r)}$ to $E_i$. If, for any $i=\{1,\dots.m\}$, $\{\mathcal{T}_i^{(r)}\}_{r\in \N}$ is a $\{\mathscr{V}^{(r)}_i\}_{r\in \N}$-\mesh for $E_i$, then $\{\cup_{i=1}^m \mathcal T^{(r)}_i\}_{r\in \N}$ is a $\{\mathscr{V}^{(r)}\}_{r\in \N}$-admissible integral $k$-mesh for $E$.\\
\underline{Good unisolvent triangular arrays in $\averaging(E)$}: assume that, for any $r\in \N$, $\mathcal T^{(r)}$ is an interpolation set (i. e. $\Card \mathcal T^{(r)}=N(r):=\ddim \mathscr{V}^{(r)}$), and that
$$\limsup_r[\L(\mathcal T^{(r)}, \mathscr V^r)]^{1/r}\leq 1\,$$
where $\L(\mathcal T^{(r)}, \mathscr V^r)$ denotes the Lebesgue constant of $\mathcal T^{(r)}$ as defined in \cite{BrPi25} (see also \eqref{} in Section \ref{} below).Then $\{\mathcal{T}^{(r)}\}_{r\in \N}$ is a $\{\mathscr{V}^{(r)}\}_{r\in \N}$-weakly admissible integral $k$-mesh.\\
\underline{Push-forward by $\mathscr C^1$-diffeomorphism}, let $\varphi:\widehat E\rightarrow E$ a $\mathscr C^1$ diffeomorphism. Let $\{\mathscr V^r(\widehat E)\}_{r\in\N}$ an increasing sequence in $\testforms(\widehat E)$ and define, for any $r\in \N$, $\mathscr V^r(E):=\{\omega\in \testforms(E): \varphi^*\omega\in \mathscr V^r(\widehat E)\}$. If $\{\widehat{\mathcal T}^{(r)}\}_{r\in \N}$, $\widehat{\mathcal T}^{(r)}=\{ \widehat T^{(1,r)}, \dots,\widehat T^{(M(r),r)}\}$, is a weakly admissible integral mesh for $\{\mathscr V^r(\widehat E)\}_{r\in \N}$ with constant $\widehat C_r$, then, the sequence $\{\mathcal T^{(r)}\}_{r\in \N}$ defined by 
$$\mathcal T^{(r)}=\left\{ \frac{\varphi_* \widehat T^{(1,r)}}{\Mass(\varphi_* \widehat T^{(1,r)})}, \dots,\frac{\varphi_* \widehat T^{(M(r),r)}}{\Mass(\varphi_* \widehat T^{(M(r),r)})}\right\}$$ 
is a $\{\mathscr{V}^{(r)}(E)\}_{r\in \N}$-weakly admissible integral $k$-mesh of constant 
$$C_r\leq \left\|\prod_{j=1}^k \sigma_j^\varphi  \right\|_{\mathscr C^0(\widehat E)}\left\|\frac{1}{\prod_{j=n-k+1}^n \sigma_j^\varphi}  \right\|_{\mathscr C^0(\widehat E)} \widehat C_r\,,$$
where $\{\sigma_1^\varphi,\dots,\sigma_n^\varphi\}$ are the singular value functions of the differential of $\varphi$ arranged in a non-increasing order, and we denoted by $\|\cdot\|_{\mathscr C^0(\widehat E)}$ the uniform norm on continuous functions on $\widehat E$.
\end{lemma}
\begin{proof}
The first three properties follows immediately from Definition \ref{def:admissiblemeshes}. The fourth follows as a corollary from \cite[Lemma 7]{BrPi25}.
\end{proof}

\subsection{Spaces of polynomial differential forms} \label{sect:polydf}
The concept of admissibility for integral $k$-meshes given in Definition \ref{def:admissiblemeshes} applies to any increasing sequence $\{\mathscr{V}^{(r)}\}_{r\in \N}$ of subspaces of $\testforms(E)$, but clearly the construction of specific admissible meshes depends on the considered sequence. In the present subsection we introduce two important instances of such sequences: 
the space of complete polynomial forms (denoted by $\{\P_r \Lambda^k\}_{r\in \N}$)  and that of trimmed polynomial differential forms (denoted by $\{\P_r^- \Lambda^k\}_{r\in \N}$). The results of Sections \ref{sec:4} and \ref{sec:5} will be specialized to these settings. Such spaces are not only the natural counterpart of polynomials in the framework of differential forms, but are also celebrated finite element exterior calculus \cite{ArnoldFalkWintherActa}, and paved the way for a theory of interpolation of differential forms \cite{AlonsoRapettiLebesgue,ABRCalcolo}.

Let us denote by $ \P_r $ and $ \H_r $ the space of $n$-variate polynomials of total degree $ r $ and the space of homogeneous polynomial of degree $ r $, respectively. Let us also define the ring of polynomials as $ \P := \oplus_{r=0}^\infty \H_r $. On each $ \H_r $, we may consider the lexicographically ordered monomial basis $ \homonomials{r} $. This choice induces on the monomials $ x^\beta := \prod_{i=1}^n x_i^{\beta_i} $ the graded lexicographical order. We denote by $ \monomials{} $ such a basis for $ \P $, and by $ \monomials{r} $ the basis for $ \P_r $ consisting of the first $ \binom{n+r}{r}$ elements of $ \monomials{} $. 

The space of \emph{complete} polynomial differential forms is
\begin{equation} \label{eq:defPLk}
\P \Lambda^k := \bigoplus_{j=0}^\infty \H_j \otimes \Lambda^k = \P \otimes \Lambda^k ,
\end{equation}
that is, polynomial sections of the $k$-th exterior power of the cotangent bundle of $ \R^n $. Truncating the above direct sum to $ j = r $, we obtain the spaces $ \P_r \Lambda^k $, whose dimension is readily computed as 
\begin{equation*}
    N(r) := \dim \P_r \Lambda^k = \binom{r+n}{r} \binom{n}{k} .
\end{equation*}
Notice that, under the assumption we made on $E$, the space $ \P_r \Lambda^k (E) $ is an $ N(r) $-dimensional subspace of $\testforms{(E)}$. Since $ \Lambda^k = \Span \{ \de x_\alpha , \, | \alpha | = k , \,  \alpha \text{ increasing} \} $, for each $ j $ we may define a basis $ \khomonomials{j}{k} := \{ h \otimes \de x_\alpha, \, h \in \mathcal{B}^{\mathrm{hom}}_j, \, |\alpha|=k, \alpha \text{ increasing} \}$ for $ \H_j \otimes \Lambda^k $, where we again consider the lexicographical order on the pair $(h,\de x_\alpha)$. Note also that $\{\khomonomials{j}{k}\}_{j\in \N}$ induces a graded basis $ \kmonomials{}{k} $ for $ \P \Lambda^k $ via \eqref{eq:defPLk}. Finally, truncating this basis at $ N(r) $ we obtain the basis $ \kmonomials{r}{k} $ for $ \P_r \Lambda^k $.

\begin{example} 
In the case $n=2$, $k=1$, and $r=1$, the above sets are:
\begin{eqnarray*}
&\homonomials{0}=\{1\},\; & \khomonomials{0}{1}=\{\de x_1,\de x_2\},\\
&\homonomials{1}=\{x_1,x_2\},\; &\khomonomials{1}{1}=\{x_1\de x_1,x_1\de x_2,x_2\de x_1,x_2\de x_2\},\\
&\homonomials{2}=\{x_1^2,x_1x_2,x_2^2\},\; &\khomonomials{2}{1}=\{x_1^2\de x_1,x_1^2\de x_2,x_1x_2\de x_1,x_1x_2\de x_2,x_2^2\de x_1, x_2^2\de x_2\}\,,
\end{eqnarray*} 
so that
$$\kmonomials{2}{1}=\{\de x_1,\de x_2, x_1\de x_1,x_1\de x_2,x_2\de x_1,x_2\de x_2, x_1^2\de x_1,x_1^2\de x_2,x_1x_2\de x_1,x_1x_2\de x_2,x_2^2\de x_1, x_2^2\de x_2\} . $$
\end{example}

As a second relevant space that fits our framework, we consider the space of \emph{trimmed} polynomial differential forms. This space is defined as 
$$ \P_r^- \Lambda^k := \P_{r-1} \Lambda^k \oplus \kappa \left( \H_{r-1} \Lambda^{k+1} \right) ,$$
where $ \kappa: \testforms \to \mathscr{D}_0^{k-1} $ is the \emph{Koszul differential} \cite[p. 852]{Lang}, i.e., the contraction with the identity vector field. With respect to Cartesian coordinates, if $ \omega := p(x_1, \ldots, x_n) \de x_{\sigma(1)} \wedge \ldots \wedge \de x_{\sigma(k)} $, then
$$ \kappa \omega = \sum_{i=1}^k (-1)^i p(x_1, \ldots, x_n) x_{\sigma(i)} \de x_{\sigma(1)} \wedge \ldots \wedge {\de x}_{\sigma(i-1)} \wedge  {\de x}_{\sigma(i+1)} \wedge \ldots \wedge \de x_{\sigma(k)} .$$
In the lowest order, i.e. for $ r = 1 $, $ \P_1^- \Lambda^k (E)$ is the space of \emph{Whitney forms} \cite[p. 139]{Whitney}. For $ r > 1$, $ \P_r^- \Lambda^k $ is an intermediate space between $ \P_{r-1} \Lambda^k $ and $ \P_r \Lambda^k $. 
As a consequence, most of our results cast for the complete space directly apply to the trimmed space as well.

\section{Construction of $\forms{r}{k}$-admissible integral $k$-meshes}\label{sec:construction}

\subsection{Constructing $\forms{r}{k}$-admissible integral $k$-meshes by Markov Inequality} \label{sect:markov}
The Markov Inequality, here recalled in the forthcoming Eq. \eqref{markovinequality}, is a classical tool in approximation theory. Because of its ductility, it has been investigated for long time and now presents broad scope connections with several branches of mathematics; for an account, we address the reader to \cite{PaPl86,BaGo71}. 

Let us recall that a polynomial determining compact set $E\subset\R^n$ admits a Markov inequality if there exist $\markov<+\infty$ (the \emph{Markov constant}) and $\beta<+\infty$ (the \emph{Markov exponent}) such that, for any polynomial $p$ of degree at most $r$, one has
\begin{equation}\label{markovinequality}
\max_{x\in E}|\nabla p(x)|\leq \markov r^\beta\max_{x\in E}|p(x)|\,.
\end{equation}
As proved in  \cite{Wilhelmsen74}, any fat convex body (i.e. $E=\overline{\interior E}$) admits a Markov inequality with exponent $\beta=2$ and constant $ \markov$ equal to the reciprocal of the minimum distance of two supporting hyperplanes.
 
\begin{theorem}[Fundamental estimate on convex bodies]\label{FE-CM}
Let $E \subset \R^n$ be a convex body with Markov constant $ \markov$. Let $\mathcal T=\{T^{(s)}\}_{s=1,\dots,M}\subset \averaging(E)$ be such that, for any $s=1,2,\dots, M$, there exist $x^{(s)}\in E$, $A^{(s)}\in M_{n,k}(\R^n)$ with $\Rank(A^{(s)})=k$, a compact set $\Omega^{(s)}\subset\R^k$ of positive finite Lebesgue measure, for which
$$T^{(s)}(\omega)= \frac 1{\haus^k(S^{(s)})}\int_{S^{(s)}}\langle\omega;\sigma^{(s)}\rangle \de \haus^k\;,\;\;\forall \omega\in \testforms(E)\,,$$
where $S^{(s)}$ is the $k$-rectifiable set $\{x^{(s)}+A^{(s)}y,y\in \Omega^{(s)}\}\subset E$ endowed by the orientation
$$\sigma^{(s)}:=\frac{A_{:,1}^{(s)}\wedge\dots\wedge A_{:,k}^{(s)}}{|A_{:,1}^{(s)}\wedge\dots\wedge A_{:,k}^{(s,r)}|_{\Lambda_k}}\,.$$
Let $r\in \N$, and let us assume that there exists $c_1<1$ such that, for any $x\in E$, we can find $s_1,s_2,\dots,s_m\in \{1,\dots,M\}$ such that
\begin{align}\label{FA1}
&\sup_{\tau\in \Lambda_k}\Bigg\{\min_{a\in \R^m}\Big\{|a|_1: \sum_{j=1}^m a_j \sigma^{(s_j)}=\tau\Big\}, |\tau|=1, \tau\text{ simple}\Bigg\}=:c_2<+\infty\\
&\max_{j=1,\dots,m}\max_{z\in S^{s_j}}|x-z|\leq c_1\left(c_2 \markov r^2\sqrt{\binom{n}{k}}\right)^{-1}\,.\label{FA2}
\end{align}
Then, for any $\omega\in \mathscr P_r\Lambda^k$ we have
\begin{equation}\label{samplinginequality}
\|\omega\|_0\leq \frac{c_2}{1-c_1} \|\omega\|_{\mathcal{T}}.
\end{equation}
\end{theorem}
\begin{proof}
Let $\omega\in \forms{r}{k}$ be any polynomial form. We prove that \eqref{samplinginequality} holds for $\omega$ under the assumption that there exist a point $\bar x\in E$ and a simple $k$-vector $\bar\tau\in \Lambda_k$ such that 
\begin{equation}\label{achievemax}
\max_{x\in E}|\omega(x)|^*=\langle\omega(\bar x);\bar \tau\rangle,
\end{equation}
the general case will readily follow by approximation. 

Let $m\in \N$, $s_1,s_2,\dots,s_m\in\{1,\dots,M\}$ and $c_1<1$ be such that
\begin{equation}\label{specifyass2}
\max_{j=1,\dots,m}\max_{z\in S^{s_j}}|\bar x-z|\leq c_1\left(c_2\markov r^2\sqrt{\binom{n}{k}}\right)^{-1},
\end{equation}
and let  $\bar a\in \R^m$ be any vector realizing
\begin{equation}\label{specifyass1}
\min_{a\in \R^m}\Big\{|a|_1: \sum_{j=1}^m a_j \sigma^{(s_j)}=\bar \tau\Big\}\,.
\end{equation}
Plugging \eqref{specifyass1} into \eqref{achievemax}, we compute
\begin{align}
&\max_{x\in E}|\omega(x)|^*=\langle\omega(\bar x);\bar \tau\rangle=\sum_{j=1}^m\bar a_j\langle \omega(\bar x);\sigma^{(s_j)}\rangle=\sum_{j=1}^m\bar a_j\frac{1}{\haus^k(S^{s_j})}\int_{S^{s_j}}\langle \omega(\bar x);\sigma^{(s_j)}\rangle \de\haus^k(x)\notag\\
=&\sum_{j=1}^m\bar a_j\frac{1}{\haus^k (S^{s_j})}\int_{S^{s_j}}\langle \omega(x);\sigma^{(s_j)}\rangle \de\haus^k(x)+\sum_{j=1}^m\bar a_j\frac{1}{\haus^k (S^{s_j})}\int_{S^{s_j}}\langle \omega(\bar x)-\omega(x);\sigma^{(s_j)}\rangle \de\haus^k(x)\notag\\
\leq& |\bar a|_1\|\omega\|_{\mathcal{T}}+|\bar a|_1\max_{j=1,\dots,m} \frac{1}{\haus^k (S^{s_j})}\int_{S^{s_j}}|\langle \omega(\bar x)-\omega(x);\sigma^{(s_j)}\rangle| \de\haus^k(x)\notag\\
\leq& c_2\left(\|\omega\|_{\mathcal{T}}+\max_{j=1,\dots,m} \frac{1}{\haus^k (S^{s_j})}\int_{S^{s_j}}|\omega(\bar x)-\omega(x)|_{\Lambda^k} \de\haus^k(x)\right)\notag\\
=& c_2\left(\|\omega\|_{\mathcal{T}}+\max_{j=1,\dots,m} \frac{1}{\leb^k(\Omega^{s_j})}\int_{\Omega^{s_j}}|\omega(\bar x)-\omega(x^{s_j}+A^{s_j}y)|_{\Lambda^k} \de y\right)\notag\\
=&c_2\left(\|\omega\|_{\mathcal{T}}+\max_{j=1,\dots,m} \frac{1}{\leb^k(\Omega^{s_j})}\int_{\Omega^{s_j}}\left({\sum_{\alpha}}'(\nabla \omega_\alpha(\xi_{\alpha,j,y});\bar x-x^{s_j}-A^{s_j}y)^2\right)^{1/2}\de y\right)\,,\label{estimate1}
\end{align}
where the point $\xi_{\alpha,j,y}$ lies in the segment $[\bar x, x^{s_j}+A^{s_j}y]$. Notice that $\xi_{\alpha,j,y} \in E$ since $ E $ is convex.

Using the Cauchy-Schwartz Inequality, the bound of Eq. \eqref{specifyass2}, and the Markov Inequality \eqref{markovinequality}, we obtain that, for each $ y\in \Omega^{s_j}$ and $j\in\{1,\dots,m\}$, the following inequality holds:
\begin{align}
&\left({\sum_{\alpha}}'(\nabla \omega_\alpha(\xi_{\alpha,j,y}),\bar x-x^{s_j}-A^{s_j}y)^2\right)^{1/2}\leq \left({\sum_{\alpha}}'|\nabla \omega_\alpha(\xi_{\alpha,j,y})|^2\right)^{1/2}|\bar x-x^{s_j}-A^{s_j}y|\notag\\
\leq& \left({\sum_{\alpha}}'|\nabla \omega_\alpha(\xi_{\alpha,j,y})|^2\right)^{1/2} \max_{j=1,\dots,m}\max_{z\in S^{s_j}}|\bar x-z|\leq \left({\sum_{\alpha}}'|\nabla \omega_\alpha(\xi_{\alpha,j,y})|^2\right)^{1/2} c_1\left(c_2\markov r^2\sqrt{\binom{n}{k}}\right)^{-1}\notag\\
\leq& \left({\sum_{\alpha}}' \markov^2 r^4 \max_{x\in E}|\omega_\alpha(x)|^2 \right)^{1/2} c_1\left(c_2\markov r^2\sqrt{\binom{n}{k}}\right)^{-1} = \left({\sum_{\alpha}}' \max_{x\in E}|\omega_\alpha(x)|^2 \right)^{1/2} c_1\left(c_2\sqrt{\binom{n}{k}}\right)^{-1}\notag\\
\leq& \sqrt{\binom{n}{k}}\max_{\alpha} \max_{x\in E}|\omega_\alpha(x)|c_1\left(c_2\sqrt{\binom{n}{k}}\right)^{-1}=\frac{c_1}{c_2}\max_{\alpha} \max_{x\in E}|\omega_\alpha(x)|\leq \frac{c_1}{c_2}\max_{x\in E}|\omega(x)|^*\,.\label{estimate2}
\end{align}
Finally, using Eq. \eqref{estimate2} to estimate the last term of Eq. \eqref{estimate1}, we obtain
$$\max_{x\in E}|\omega(x)|^*\leq c_2\left(\|\omega\|_{\mathcal{T}}+\frac{c_1}{c_2}\max_{x\in E}|\omega(x)|^*\right) .$$
Since we are assuming $c_1<1$, Eq. \eqref{samplinginequality} follows. \myqed
\end{proof}

\begin{remark}
The convexity assumption of Theorem \ref{FE-CM} strongly simplifies computations, but is not strictly necessary. The extension to more general compact sets satisfying a Markov inequality can be obtained by a suitable modification of the argument in the proof, following the lines of \cite[Thm. 5]{CalviLevenberg08}.
\end{remark}

\subsubsection{An explicit example of integral $k$-mesh by Markov Inequality}
We show how Theorem \ref{FE-CM} can be used to construct an integral $k$-mesh on a real fat convex body $E$. 

Let $0<c_1<1$ and $r\in \N$. Consider a bounding box $Q:=[a_1,b_1]\times [a_2,b_2]\times \dots \times[a_n,b_n]\supset E$ and a tessellation $\{Q_i\}_{i=1,\dots,\tilde N}$ of $Q$ made of coordinate $n$-dimensional cubes with side lengths $ d $ bounded from above by 
\begin{equation}\label{mydiameter}
d:=\frac{c_1w(E)}{4\sqrt{n} \binom{n}{k}r^2},\;w(E):=\min_{v\in \R^n}\min\{\ell: |(x;v)|\leq \ell\; \forall x\in E\}.
\end{equation}
For any $i=1,\dots,m$ such that $K_i:=Q_i\cap\interior E\neq \emptyset$, pick $\tilde x_i\in K_i$ and define
$$\{x^{(s,r)}\}_{s=1}^{M(r)}:=(\, \overbrace{\tilde x_1,\ldots,\tilde x_1}^{\binom{n}{k}\text{ times}}, \overbrace{\tilde x_2,\ldots,\tilde x_2}^{\binom{n}{k}\text{ times}},\ldots, \overbrace{\tilde x_m,\ldots,\tilde x_m}^{\binom{n}{k}\text{ times}}\,)\,.$$
Pick 
$$0<\epsilon^{(s,r)}\leq\min\left(\dist(x^{(s,r)},\partial E),\dist(x^{(s,r)},\partial Q_{i(s)})  \right),$$
where $i(s)$ is the unique index such that $x^{(s,r)}\in Q_{i(s)}$, and set $\Omega^{(s,r)}\equiv\epsilon^{(s,r)}\Sigma_k,\,\forall s=1,\dots,M(r).$
Given the enumeration $\alpha_1,\alpha_2,\dots,\alpha_{\binom{n}{k}}$ of the set of multi-indices $\alpha$ of length $k$ we set
\begin{equation*}
\label{tuttelemappelineari}
\{A^{1,r},A^{2,r},\dots,A^{\binom{n}{k},r},A^{\binom{n}{k}+1,r},\dots,A^{M(r),r}\}:=\{P^{\alpha_1},P^{\alpha_2},\dots,P^{\alpha_{\binom{n}{k}}},P^{\alpha_1},\dots,P^{\alpha_{\binom{n}{k}}}\},
\end{equation*}
where $P^\alpha$ is the $n$ by $k$ matrix whose columns are the vectors $e_{\alpha(1)},\dots,e_{\alpha(k)}$ of the canonical basis of $\R^n$. Finally consider the collection 
\begin{equation*}\label{myconvmesh}
\mathcal T^{(r)}:=\{(x^{(s,r)},\Omega^{(s,r)},A^{(s,r)}),s=1,\dots,M(r)\}.
\end{equation*} 
An example of this construction is reported in Figure \ref{fig:cassinimesh}. Note in particular that, since any $x\in E$ belongs to the closure of some $K_i$, we can find $\binom{n}{k}$ elements $(x^{(s_j,r)},\Omega^{(s_j,r)},A^{(s_j,r)})$ of $\mathcal T^{(r)}$ such that 
\begin{enumerate}[a)]
\item $x^{(s_j,r)}\equiv \tilde x_i$ for $j=1,2,\dots,\binom{n}{k}$;
\item The set of $k$-vectors $\tau_j=\frac{A^{(s_j,r)}_{:,1}\wedge A^{(s_j,r)}_{:,2}\dots\wedge A^{(s_j,r)}_{:,k}}{|A^{(s_j,r)}_{:,1}\wedge A^{(s_j,r)}_{:,2}\dots\wedge A^{(s_j,r)}_{:,k}|}$ is an orthonormal basis of $\Lambda_k$;
\item Setting $F^{(s_j)}:=\{x^{(s_j,r)}+A^{(s_j,r)}y, y\in \Omega^{(s_j,r)} \}$, we have $ \max_{j=1,\dots,\binom{n}{k}}\max_{z\in \mathcal F^{(s_j)}}|x-z|\leq \sqrt{n} d.$
\end{enumerate}

Also, note that if we consider the canonical basis $\{e_{\alpha(1)}\wedge\dots\wedge e_{\alpha(k)}:\,|\alpha|=k\}$ of $\Lambda_k(\R^n)$, then any unit simple $k$-vector $\tau$ can be written in the form $\tau=\sum'_{|\alpha|=k}a_\alpha e_{\alpha(1)}\wedge\dots\wedge e_{\alpha(k)}$ with
$$|a|_1\leq \sqrt{\binom{n}{k}}.$$ Thus, due to b) above, we can assume \eqref{FA1} with $c_2=\sqrt{\binom{n}{k}}.$

On the other hand, we recall that the Markov constant of $E$ is bounded by $4/w(E)$ due to the classical result in \cite{Wilhelmsen74}. Thus, combining \eqref{mydiameter} with property c) above, we have
$$ \max_{j=1,\dots,\binom{n}{k}}\max_{z\in \mathcal F^{(s_j)}}|x-z|\leq\sqrt{n}d=\frac{c_1w(E)}{4 \binom{n}{k}r^2}= \frac{c_1}{\markov r^2\binom{n}{k}},$$
i.e., \eqref{FA2} holds as well. 

Finally, we can give an asymptotic upper bound to $\Card \mathcal T^{(r)}$ noticing that the number of $k$-faces of the tessellation that lie $E$ is smaller that the number of $k$-faces of the tessellation. The latter is at most $\binom{n}{k}$ times the number of cubes, thus 
$$ \Card \mathcal T^{(r)}\leq \binom{n}{k}\left(\frac{\mathrm{diam}(E)}{d}\right)^n=\binom{n}{k}^{n+1}\left(\frac{\mathrm{diam}(E)}{w(E)} \frac{4\sqrt n}{c_1} \right)^n r^{2n}=\mathcal O(r^{2n})\,.$$

Since the hypotheses of Theorem \ref{FE-CM} are fulfilled, we can claim the following result.
\begin{proposition}[Admissible integral $k$-mesh on fat convex body]\label{prop:convmesh}
The sequence $\mathcal{T}=\{\mathcal{T}^{(r)}\}_{r\in \N}$ defined in \eqref{myconvmesh} is an \mesh for $E$ with constant $C:=\frac{\sqrt{\binom{n}{k}}}{1-c_1},$ i.e.
$$\|\omega\|_0\leq  \frac{\sqrt{\binom{n}{k}}}{1-c_1}\|\omega\|_{\mathcal{T}^{(r)}},\quad \forall \omega \in\forms{r}{k}(E).$$
\end{proposition}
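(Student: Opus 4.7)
My plan is to observe that nearly all the heavy lifting has already been performed in the construction preceding the statement, so the proof amounts to organizing this information into a verification of the two defining properties of a $\{\forms{r}{k}(E)\}_{r\in\N}$-admissible integral $k$-mesh (Definition \ref{def:admissiblemeshes}): the polynomial cardinality bound and the sampling inequality with uniform constant.

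First I would handle the cardinality condition. The tessellation is made of coordinate cubes of side length $d=\tfrac{c_1 w(E)}{4\sqrt{n}\binom{n}{k}r^2}$, so the number of cubes needed to cover $E$ is at most $(\diam(E)/d)^n=\mathcal O(r^{2n})$, and for each cube intersecting $\interior E$ we placed $\binom{n}{k}$ supports. Therefore $\Card\mathcal T^{(r)}\le \binom{n}{k}^{n+1}\bigl(\tfrac{\diam E}{w(E)}\tfrac{4\sqrt n}{c_1}\bigr)^n r^{2n}$, and $\limsup_r[\Card\mathcal T^{(r)}]^{1/r}=1$, exactly as required by the first bullet of Definition \ref{def:admissiblemeshes}.

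Next I would verify the hypotheses of Theorem \ref{FE-CM} for each fixed $r$. Properties (a)--(c) of the construction already give, for any $x\in E$, a subcollection of $\binom{n}{k}$ elements of $\mathcal T^{(r)}$ whose orientations $\tau_j$ form an orthonormal basis of $\Lambda_k$; expanding any unit simple $k$-vector in this basis yields $\ell^1$-coefficient norm at most $\sqrt{\binom{n}{k}}$, so \eqref{FA1} is satisfied with $c_2=\sqrt{\binom{n}{k}}$. Using the Wilhelmsen bound $\markov\le 4/w(E)$ together with property (c) and the definition of $d$, one directly sees $\max_j\max_{z\in F^{(s_j)}}|x-z|\le \sqrt n\,d = \tfrac{c_1}{\markov r^2\binom{n}{k}}$, i.e.\ \eqref{FA2}. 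Plugging these values of $c_1,c_2$ into the conclusion \eqref{samplinginequality} of Theorem \ref{FE-CM} yields, for every $\omega\in\forms{r}{k}(E)$,
\[
\|\omega\|_0 \le \frac{\sqrt{\binom{n}{k}}}{1-c_1}\,\|\omega\|_{\mathcal T^{(r)}},
\]
which is the required uniform constant $C=\sqrt{\binom{n}{k}}/(1-c_1)$, independent of $r$, thus establishing \eqref{constantofmeshHk}.

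Since essentially every computation has already been carried out in the construction, the proof should consist of little more than stating that the constructed data meet the hypotheses of Theorem \ref{FE-CM} and invoking it. The only mildly delicate point, and the one I would expect to present as the ``main obstacle'' for a careful reader, is verifying that property (b) really allows one to set $c_2=\sqrt{\binom{n}{k}}$ \emph{uniformly in} $x\in E$: this requires noticing that the choice of basis $\{\tau_j\}$ depends on which cube $K_i$ contains $x$, but because each local basis is orthonormal, the $\ell^1$-bound $\sqrt{\binom{n}{k}}$ on the coordinates of any unit simple $k$-vector is independent of the basis, which is what makes the argument go through cleanly.
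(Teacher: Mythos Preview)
Your proposal is correct and follows essentially the same approach as the paper: the construction preceding the statement already verifies properties (a)--(c), establishes \eqref{FA1} with $c_2=\sqrt{\binom{n}{k}}$ and \eqref{FA2} via the Wilhelmsen bound, and gives the $\mathcal O(r^{2n})$ cardinality estimate, so that the proposition is stated immediately after the sentence ``Since the hypotheses of Theorem \ref{FE-CM} are fulfilled, we can claim the following result.'' Your write-up simply makes this organization explicit, which is exactly what is called for.
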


\begin{figure}[H]
\caption{A visual representation of the construction of Proposition \ref{prop:convmesh} in the case $n=2$, $k=1$. The thick line represents the boundary of the convex fat set $E\subset\R^2$. Gray squares are the squares $Q_i$'s of the considered coordinate tessellation that intersect $E$. In this example the points $\tilde x_i$ and the two columns of the matrices $A^{(s,r)}$ are randomly chosen. Note that in such a way there is no control on the size of the support of the constructed functionals.} \label{fig:cassinimesh}
\begin{center}
\includegraphics[scale=0.9]{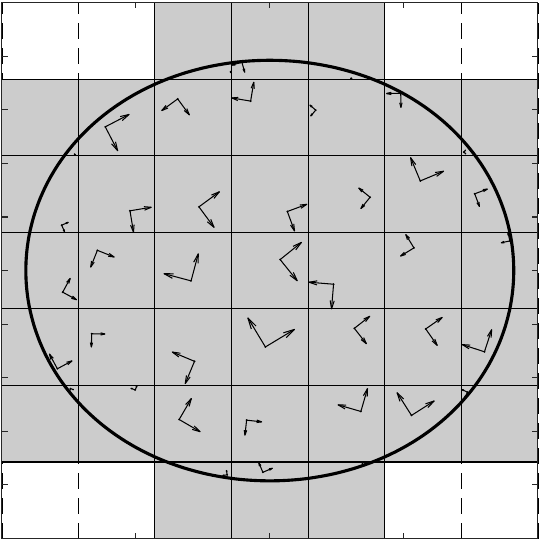}
\end{center}
\end{figure}

\begin{example}[An admissible integral $2$-mesh for the unit cube]\label{ex:markovsquare}
We can construct an admissible integral $2$-mesh for the unit cube $E:=[0,1]^3$ by slightly sharpening the construction of Proposition \ref{prop:convmesh}. 

We consider a tessellation of $E$ made of coordinate cubes with side 
$$d= \frac{c_1w(E)}{2\sqrt{n} \binom{n}{k}r^2}=\frac{c_1}{2\sqrt{n} \binom{n}{k}r^2}\,,$$
where $c_1<1$ is such that $d^{-1}\in \N$. For any  $x\in E$ we can find a closed cube $Q$ of the tessellation for which $x\in Q$. Let $v$ be the closest vertex of $Q$ to $x$ and let $\mathcal F^{(1)},\mathcal F^{(2)}, \mathcal F^{(3)}$ be the faces of $Q$ containing $v$. It is clear that the $2$-vectors $\tau^{(1)}, \tau^{(2)}, \tau^{(3)}$ representing any orientation of $\mathcal F^{(1)},\mathcal F^{(2)}, \mathcal F^{(3)}$ form an orthonormal basis of $\Lambda_k$ (i.e., property b) above still holds). It is also easy to see that
$$ \max_{j=1,\dots,\binom{n}{k}}\max_{z\in \mathcal F^{(s_j)}}|x-z|\leq \sqrt{n} d/2.$$  
Denote by $\mathcal T^{(r)}$ the set of integral averaging currents supported on all faces of the tessellation. By Theorem \ref{FE-CM}, we have
$$\|\omega\|_0\leq  \frac{\sqrt{\binom{n}{k}}}{1-c_1}\|\omega\|_{\mathcal{T}^{r}},\;\forall \omega \in\forms{r}{k}(E),$$
that is, $\mathcal T^{(r)}$ is an integral $2$-mesh of constant $\sqrt{\binom{n}{k}}/(1-c_1)$. The cardinality of this mesh is given by the number of faces in the tessellation, and can be computed. Consider an $n$-cube, and split each side in $ N $ parts. Then, the number of $ k $-cubes (i.e., the number of $ k $-dimensional faces of the so obtained complex) is $ \binom{n}{k} N^k (N+1)^{n-k} $. In our framework, this gives that the number of $2$-faces in the $3$-cube equals to
$$\Card \mathcal T^{(r)}=\binom{3}{2}\frac 1 {d^k}\left(\frac{1}{d}+1\right)^{n-k}=\mathcal O(r^6).$$
\end{example}

\begin{figure}[H]
\caption{Cardinalities of $\forms{r}{k}$-admissible integral $k$-meshes for the square (i.e., $n=2$) and the cube (i.e., $n=3$) constructed following Example \ref{ex:markovsquare} with $r$ varying from $1$ to $20$, and $k=1,\dots,n$, and $c_1=1/2$.}
\label{fig:markovsquare}
\begin{center}
\includegraphics[scale=0.6]{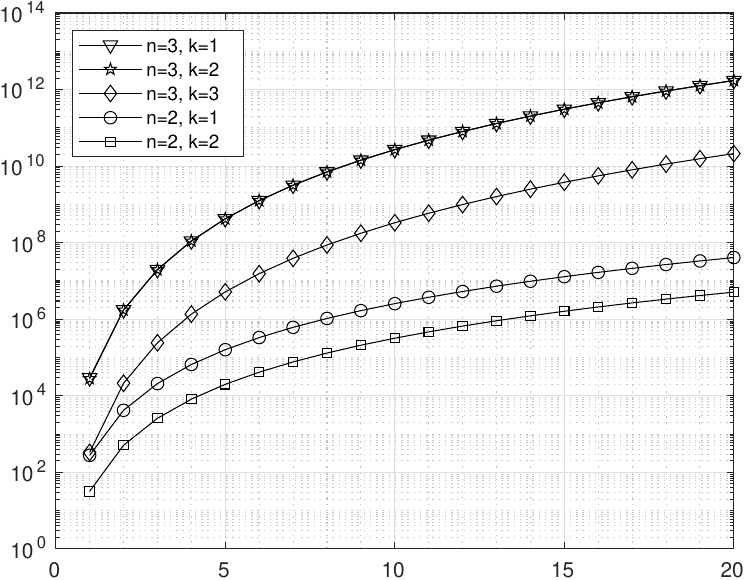}
\end{center}
\end{figure}

\begin{remark} \label{rmk:largemesh}
The size of meshes constructed by the technology based on Theorem \ref{FE-CM} grows very quickly, see Figure \ref{fig:markovsquare}. Although this strategy is very flexible and can be easily adapted to many scenarios, meshes of such a large cardinality are not of practical use for most applications. This motivates the development of an alternative technique, which is carried in the next section.
\end{remark}

\subsection{Constructing $\forms{r}{k}$-\mesh by Baran inequality} \label{sect:baran}
The classical Bernstein Inequality on the interval $E:=[-1,1]$ states that, for any polynomial $p$ such that $|p(x)|\leq 1$ for any $x\in E$, one has
\begin{equation}\label{BernsteinInequality}
\frac{|p'(x)|}{\sqrt{\max_{y\in E}|p(y)|^2-|p(x)|^2}}\leq \deg(p) \frac 1{\sqrt{1-x^2}},\quad \forall x\in\interior E.
\end{equation}
This inequalilty has immediate applications in polynomial sampling. 
In particular, \eqref{BernsteinInequality} implies that for any polynomial $ p\in \mathscr P $ such that $ \|p\|_E\leq 1$, one has 
\begin{equation}\label{samplingineqbyBern}
|\arcos(p(x))-\arcos(p(y))|\leq \deg(p) \left|\int_x^y\frac{1}{\sqrt{1-s^2}}ds\right| = \deg(p)|\arcos(y)-\arcos(x)| .
\end{equation}
\begin{example}[Chebyshev-Lobatto points as an admissible mesh]
Consider $[-1,1]$ as (the closure of) a Riemannian manifold isometric to the upper semicirle $\mathbb S^1_+:=\{x\in \R^2:|x|=1,x_2>0 \}$ endowed by the round metric $g$ that, once pulled back onto the open interval $(-1,1)$, induces the distance $d_E(x,y):=|\arcos(y)-\arcos(x)|$. Assuming that $p(x)=\|p\|_E=1$ for some $x\in \interior E$, it is possible to rewrite \eqref{samplingineqbyBern} as
\begin{equation*}
|\arcos p(y)|\leq \deg(p)d_E(x,y)\,, \quad \forall p\in \mathscr P.
\end{equation*}
Hence, if $h=d_E(x,y)<\pi/(2\deg(p))$ we have $|p(y)|\geq \cos(\deg(p) h).$ 
Therefore, if $\mathcal{T}=\{x_1,x_2,\dots,x_M\}$ is a set of points in $E$ such that
$$h(\mathcal{T},K):=\max_{y\in E}\min_{x\in \mathcal{T}}d_E(x,y)<\pi/(2r),$$
then one has the sampling inequality 
\begin{equation}\label{SIbybaran}
\max_{x\in E}|p(x)|\leq \frac 1{\cos(rh(\mathcal{T},K))}\max_{x\in \mathcal{T}}|p(x)|
\end{equation}
for any $p\in \mathscr P_r.$ In particular a set $\mathcal T_m$ of $m+1$ Chebyshev-Lobatto points 
$x_i:=\cos(i\pi/(m+1))$ has the property $h(\mathcal T_m,[-1,1])=\frac \pi{2 m}$. It follows that $\max_{x\in[-1,1]}|p(x)|\leq \frac 1{\cos(\pi r/(2m)) }\max_{0\leq i\leq m}|p(x_i)|$ for any $m=m(r)>r.$ In other words $\{\mathcal T_{m(r)}\}$ is an admissible mesh for $E.$ 
 \end{example}
 
The generalization of the Bernstein Inequality \eqref{BernsteinInequality} to the case of several variables has been carried out by Baran \cite{Miro92,Miro94}. The crucial step in this generalization consists in identifying the function $1/\sqrt{1-x^2}$ appearing in \eqref{BernsteinInequality} as the normal derivative at $x$ of the Green function of $\C\setminus[-1,1]$ with a logarithmic pole at infinity. This quantity has a natural counterpart in several complex variables that, for a polynomially determining set $E \subset \C^n$, is the pluricomplex Green function
$$V_E^*(z):=\limsup_{\zeta\to z}\sup\left\{\frac 1{\deg p}\log|p(\zeta)|, p\in \mathscr P,\, \max_K|p|\leq 1\right\} ,$$
which is a maximal plurisubharmonic function solving the complex Monge Ampere homogeneous equation \cite{Norm12}. The generalization of \eqref{BernsteinInequality} then assumes the following form. 
\begin{theorem}[Baran Inequality \cite{Miro92}]\label{thm:baran}
Let $E \subset \R^n$ be a compact set with non empty interior. 
Then
\begin{equation}
\frac{|\partial_v p(x)|}{\sqrt{1-|p(x)|^2}}\leq \deg(p) \partial_v^+ V_E^*(x), \quad \forall p\in\mathscr P:\,\max_K|p|\leq 1,\,v\in \R^n,
\end{equation}
where 
$$\partial_v^+ V_E^*(x):=\liminf_{t\to 0^+}V_E^*(x+itv).$$
\end{theorem}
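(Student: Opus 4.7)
The plan is to follow the classical complex-analytic argument that converts a Bernstein-type polynomial inequality into a bound involving the pluricomplex Green function. Set $r := \deg p$, $a := p(x) \in [-1,1]$, and $b := \partial_v p(x) \in \R$; I may assume $|a| < 1$ since the boundary case $|a| = 1$ follows by applying the inequality to $(1-\varepsilon)p$ and letting $\varepsilon \to 0^+$. The key auxiliary function will be
\begin{equation*}
G(z) := \frac{1}{r}\log \max \bigl(|p(z) + \sqrt{p(z)^2 - 1}|,\; |p(z) - \sqrt{p(z)^2 - 1}|\bigr), \qquad z \in \C^n,
\end{equation*}
built from the two local holomorphic branches of $\zeta \mapsto \zeta + \sqrt{\zeta^2 - 1}$ (defined outside $[-1,1]$). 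Since the two quantities under the maximum have product equal to one, their max is $\geq 1$ and hence $G \geq 0$.

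Next, I will verify three properties of $G$. First, $G$ is plurisubharmonic on $\C^n$: away from the branch locus $\{p(z)^2 = 1\}$ it is the pointwise maximum of two PSH functions (log-moduli of local holomorphic functions), and it extends continuously across the branch locus where the two branches coincide. Second, $G$ belongs to the Lelong class $\mathcal L(\C^n)$ of PSH functions satisfying $u(z) \leq \log|z| + O(1)$ at infinity: as $|z| \to \infty$, the dominant branch behaves like $2 p(z) = O(|z|^r)$, so $G(z) \leq \log|z| + O(1)$. Third, $G$ vanishes on $E$: for $z \in E$ the bound $|p(z)| \leq 1$ forces $p(z)^2 - 1 \in (-\infty, 0]$, so $p(z) \pm \sqrt{p(z)^2 - 1} = p(z) \pm i\sqrt{1 - p(z)^2}$ both have modulus one. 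By the standard extremal characterization of $V_E^*$ as the upper envelope of Lelong-class PSH functions that are $\leq 0$ on $E$, these three properties combine to give $G \leq V_E^*$ pointwise on $\C^n$.

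Finally, I will specialize this pointwise bound to the complex ray $t \mapsto x + itv$ for $t \to 0^+$. Since $G(x) = V_E^*(x) = 0$, dividing by $t > 0$ and taking the one-sided liminf gives, in the directional sense underlying $\partial_v^+$, the bound $\liminf_{t \to 0^+} G(x+itv)/t \leq \partial_v^+ V_E^*(x)$. The Taylor expansion $p(x+itv) = a + ibt + O(t^2)$ combined with writing $\cos\theta = a$ and $\sin\theta = \sqrt{1-a^2}$ for $\theta \in (0,\pi)$ yields $p(x+itv) \pm \sqrt{p(x+itv)^2 - 1} = e^{\pm i\theta}(1 \pm bt/\sqrt{1-a^2}) + O(t^2)$, so the larger of the two moduli is $1 + t|b|/\sqrt{1 - a^2} + O(t^2)$ and
\begin{equation*}
\lim_{t \to 0^+} \frac{G(x + itv)}{t} = \frac{|b|}{r\sqrt{1 - a^2}} = \frac{|\partial_v p(x)|}{r\sqrt{1 - p(x)^2}}.
\end{equation*}
Multiplying by $r$ produces the claimed inequality. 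The main obstacle in executing this plan is the rigorous global plurisubharmonicity of $G$ across the branch locus $\{p(z)^2 = 1\}$: off this locus the two branches of $\sqrt{p^2 - 1}$ are honest holomorphic functions and the argument is immediate, but on the locus a careful approximation (for example, replacing $p^2 - 1$ by $p^2 - 1 + \varepsilon$ and sending $\varepsilon \to 0^+$) is needed to ensure $G$ is genuinely PSH on all of $\C^n$ rather than only on the complement of a real-codimension-one set.
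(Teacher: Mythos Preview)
The paper does not prove this theorem; it is quoted from Baran \cite{Miro92} and used as a black box in the sequel. Your argument is essentially Baran's original proof and is correct.

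Your worry about plurisubharmonicity across the branch locus can be removed entirely by recognising that your $G$ is nothing but $\frac{1}{r}\, g_{[-1,1]}\!\circ p$, where $g_{[-1,1]}(\zeta)=\log\bigl|\zeta+\sqrt{\zeta^2-1}\bigr|$ (branch of modulus $\geq 1$) is the Green function of $\C\setminus[-1,1]$ with pole at infinity. This function is globally subharmonic on $\C$ (it coincides with the one-variable extremal function $V_{[-1,1]}$), and the composition of a subharmonic function with a holomorphic map is plurisubharmonic. No branch-locus surgery or $\varepsilon$-perturbation is needed.

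One remark on the statement itself: the displayed definition of $\partial_v^+ V_E^*(x)$ in the paper is missing a division by $t$; as written it would vanish on $\interior E$ by continuity of $V_E^*$. You correctly read the intended meaning $\partial_v^+ V_E^*(x)=\liminf_{t\to 0^+} V_E^*(x+itv)/t$, and with that reading your final comparison step (limit on the $G$ side, liminf on the $V_E^*$ side) is valid.
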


One can think of $F(x,v):=\partial_v^+ V_E^*(x)$ as a metric on the tangent space to $\interior E$ at $ x $. In general this is only a Finsler metric \cite{LenNormShayne04}, but in few very relevant situations it turns out to be a Riemannian metric \cite{Piazzon19torus}. We term such a metric the \emph{Baran metric} of $E$, and we denote it by $\delta_E(x,v)$. Via the Carnot-Caratheodory construction, $\delta_E(x,v)$ induces the distance
\begin{equation*}
d_E(x,y):=\inf\left\{\int_0^1 \delta_E(\gamma(t),\gamma'(t))dt, \gamma(0)=x, \ \gamma(1)=1, \ \support \gamma\subset K\right\},
\end{equation*}
which we call \emph{Baran distance} on $E$. Within this formalism, it is clear that equation \eqref{SIbybaran} holds in the multivariate context, provided that necessary changes are made. 

\begin{theorem}[Fundamental estimate by Baran Inequality]\label{th:FEB}
Let $E$ be a real body in $\R^n$. Let $\mathcal T=\{T^{(s)}\}_{s=1,\dots,M}\subset \averaging(E)$ be such that, for any $s=1,2,\dots, M$, there exist $x^{(s)}\in E$, $A^{(s)}\in M_{n,k}(\R^n)$ with $\Rank(A^{(s)})=k$, a compact set $\Omega^{(s)}\subset\R^k$ of positive finite Lebesgue measure, for which
$$T^{(s)}(\omega)= \frac 1{\haus^k(S^{(s)})}\int_{S^{(s)}}\langle\omega;\sigma^{(s)}\rangle \de\haus^k\;,\;\;\forall \omega\in \testforms(E)\,,$$
where $S^{(s)}$ is the $k$-rectifiable set $\{x^{(s)}+A^{(s)}y,y\in \Omega^{(s)}\}\subset E$ endowed by the orientation
$$\sigma^{(s)}:=\frac{A_{:,1}^{(s)}\wedge\dots\wedge A_{:,k}^{(s)}}{|A_{:,1}^{(s)}\wedge\dots\wedge A_{:,k}^{(s,r)}|}\,.$$ Let $r\in \N$ and assume that there exists $c_1<1$ such that, for any $x\in E$, we can find $s_1,s_2,\dots,s_m\in \{1,\dots,M\}$ such that
\begin{align}
c:=&\sup_{\tau\in \Lambda_k}\Bigg\{\min_{a\in \R^m}\Big\{|a|_1: \sum_{j=1}^m a_j \sigma^{(s_j)}=\tau\Big\}, |\tau|=1, \tau\text{ simple}\Bigg\}<+\infty\label{FE-A1B}\\
h:=&\max_{j=1,\dots,m}\max_{z\in S^{s_j}}d_E(x,z)< \frac{\pi}{2r}\,.\label{FE-A2B}
\end{align}
Then, for any $\omega\in \mathscr P_r\Lambda^k$ we have
\begin{equation}\label{samplinginequalityBaran}
\|\omega\|_0\leq\frac{c}{\cos(rh)} \|\omega\|_{\mathcal{T}}.
\end{equation}
\end{theorem}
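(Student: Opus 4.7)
The plan is to adapt the proof of Theorem~\ref{FE-CM}, replacing the Markov gradient bound by the sharper Chebyshev-Lobatto style consequence of Baran's inequality derived in the example above. By the same density reduction used there, I may assume the existence of $\bar x\in E$ and of a unit simple $\bar\tau\in\Lambda_k$ attaining the comass supremum, so that $\|\omega\|_0=\langle\omega(\bar x);\bar\tau\rangle\geq 0$. The hypotheses \eqref{FE-A1B} and \eqref{FE-A2B} then furnish indices $s_1,\dots,s_m$ together with $\bar a\in\R^m$ satisfying $|\bar a|_1\leq c$, $\bar\tau=\sum_j\bar a_j\,\sigma^{(s_j)}$, and $d_E(\bar x,z)\leq h<\pi/(2r)$ for every $z\in S^{s_j}$, all $j$.

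The auxiliary object driving the argument is the scalar polynomial $p(x):=\langle\omega(x);\bar\tau\rangle\in\mathscr P_r$. The unit-simple nature of $\bar\tau$ forces $|p(x)|\leq|\omega(x)|^*\leq\|\omega\|_0$ throughout $E$, hence $\|p\|_E=\|\omega\|_0=p(\bar x)$. Applying Theorem~\ref{thm:baran} to $p/\|\omega\|_0$ and integrating along a Baran geodesic joining $\bar x$ to an arbitrary $z$, precisely as in the Chebyshev-Lobatto example, yields the pointwise lower bound
\[p(z)\geq\cos(rh)\,\|\omega\|_0\qquad\text{for every }z\in E\text{ with }d_E(\bar x,z)\leq h.\]
Setting $T(\omega):=\sum_j\bar a_j\,T^{(s_j)}(\omega)$, and using that $\langle\omega(\bar x);\sigma^{(s_j)}\rangle$ is constant on $S^{s_j}$ exactly as in the derivation of \eqref{estimate1}, one obtains
\[\|\omega\|_0-T(\omega)=\sum_j\bar a_j\,\frac{1}{\haus^k(S^{s_j})}\int_{S^{s_j}}\langle\omega(\bar x)-\omega(z);\sigma^{(s_j)}\rangle\,\de\haus^k(z),\]
while hypothesis \eqref{FE-A1B} gives $|T(\omega)|\leq|\bar a|_1\,\|\omega\|_{\mathcal T}\leq c\,\|\omega\|_{\mathcal T}$ for free.

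The crux of the proof, and the step I expect to be the main obstacle, is to upgrade the pointwise bound $p(\bar x)-p(z)\leq(1-\cos(rh))\|\omega\|_0$ supplied by Baran into the estimate $\|\omega\|_0-T(\omega)\leq(1-\cos(rh))\|\omega\|_0$, equivalently $T(\omega)\geq\cos(rh)\|\omega\|_0$, for then \eqref{samplinginequalityBaran} follows immediately from the upper bound on $T(\omega)$. The natural route is to interchange the $j$-summation with the averaging, so that the inner combination $\sum_{j'}\bar a_{j'}\langle\omega(\bar x)-\omega(z);\sigma^{(s_{j'})}\rangle$ collapses to $p(\bar x)-p(z)$; the difficulty is that the supports $S^{s_j}$ are distinct, so this collapse does not take place inside a single integral and the interchange must be performed carefully, essentially through the uniformity of $c$ in the simple $k$-vector $\tau$. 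This is the Baran counterpart of the Cauchy-Schwarz and Jacobian bookkeeping that occupies the final computation in the proof of Theorem~\ref{FE-CM}; the hypothesis $h<\pi/(2r)$ plays no role except to guarantee $\cos(rh)>0$, so that $c/\cos(rh)$ is finite.
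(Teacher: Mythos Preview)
Your setup coincides with the paper's: reduce to a maximizer $\bar x,\bar\tau$ with $\|\omega\|_0=\langle\omega(\bar x);\bar\tau\rangle$, set $p(x)=\langle\omega(x);\bar\tau\rangle\in\mathscr P_r$, and deduce from Baran that $p(z)\geq\cos(rh)\|\omega\|_0$ whenever $d_E(\bar x,z)\leq h$. From here the paper does \emph{not} split into $\|\omega\|_0-T(\omega)$ as you do; it writes the single chain
\[
\|\omega\|_0\leq\frac{1}{\cos(rh)}\langle\omega(x);\bar\tau\rangle=\frac{1}{\cos(rh)}\sum_j\bar a_j\langle\omega(x);\sigma^{(s_j)}\rangle\leq\frac{1}{\cos(rh)}\sum_j\bar a_j T^{(s_j)}(\omega)\leq\frac{|\bar a|_1}{\cos(rh)}\|\omega\|_{\mathcal T}
\]
and stops. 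So the passage from the pointwise bound to the averaged one is precisely the second inequality above, presented without further comment.

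The obstacle you isolate is genuine, and your proposal does not overcome it: you need $T(\omega)=\sum_j\bar a_jT^{(s_j)}(\omega)\geq\cos(rh)\|\omega\|_0$, but Baran controls only the \emph{sum} $\sum_j\bar a_j\langle\omega(z);\sigma^{(s_j)}\rangle=p(z)$ at a single point $z$, whereas $T(\omega)$ averages the $j$-th summand over the $j$-th support. Your appeal to ``uniformity of $c$ in $\tau$'' does not help, since $c$ only bounds $|\bar a|_1$ and is already spent in the final step. The analogy with Theorem~\ref{FE-CM} is also misleading: there the remainder is controlled by bounding $|\langle\omega(\bar x)-\omega(x);\sigma^{(s_j)}\rangle|$ via the \emph{componentwise} Markov inequality on each scalar coefficient $\omega_\alpha$, which decouples the index $j$ from the integration support $S^{s_j}$; the Baran bound on the single scalar $p$ offers no such decoupling. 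As written, then, your argument has a gap at exactly the step you yourself flag, and the paper's own proof is equally terse at this passage, so you will not find additional detail there. A rigorous completion would require either a Baran-type estimate applied separately to each $q_j(x)=\langle\omega(x);\sigma^{(s_j)}\rangle$ (which yields a different constant), or a structural hypothesis that lets one average the pointwise inequality over a common support.
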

\begin{proof}
Let $\omega\in \forms{r}{k}$ be any polynomial form. We prove that \eqref{samplinginequalityBaran} holds for $\omega$ under the assumption that there exist $\bar x\in E$ and a simple $\bar\tau\in \Lambda_k$ such that 
\begin{equation}\label{achievemaxBaran}
\max_{x\in E}|\omega(x)|^*=\langle\omega(\bar x);\bar \tau\rangle,
\end{equation}
the general case will easily follow by approximation.

Let $m\in \N$, $s_1,s_2,\dots,s_m\in\{1,\dots,N\}$ be such that
\begin{equation*}\label{specifyass2Baran}
\max_{j=1,\dots,m}\max_{z\in S^{s_j}}|\bar x-z|\leq h,
\end{equation*}
and let  $\bar a\in \R^m$ be any vector realizing
\begin{equation}\label{specifyass1Baran}
\min_{a\in \R^m}\Big\{|a|_1: \sum_{j=1}^m a_j \sigma^{(s_j)}=\bar \tau\Big\}\leq c.
\end{equation}
Notice that $p(x):=\langle\omega(x);\bar \tau\rangle$ is a polynomial (whose degree does not exceed $r$) achiving its uniform norm on $E$ at the point $\bar x$. Exploiting \eqref{achievemaxBaran} and  \eqref{specifyass1Baran} we compute 
\begin{align*}
\max_{x\in E}|\omega(x)|^*=&\langle\omega(\bar x);\bar \tau\rangle=:p(\bar x)\leq \frac 1{\cos(rd_E(\bar x,x))}p(x)\leq \frac 1{\cos(rh)} \langle\omega(x);\bar \tau\rangle\\
=&\frac 1{\cos(rh)}\sum_{j=1}^m\bar a_j\langle \omega(x);\sigma^{(s_j)}\rangle\leq \frac 1{\cos(rh)}\sum_{j=1}^m\bar a_j\frac{1}{\haus^k (S^{s_j})}\int_{S^{s_j}}\langle \omega( x);\sigma^{(s_j)}\rangle \de\haus^k(x)\notag\\
\leq& \frac{|\bar a|_1}{\cos(rh)}\|\omega\|_{\mathcal{T}}\,.
\end{align*}
This shows the claim. \myqed
\end{proof}

\begin{example}[Optimal admissible integral $k$-mesh for the $n$-cube by Baran Inequality]\label{ex:baransquare}
Let $m>r$ and set $\tilde x_i:=\cos(i \pi/m)$ for any $i\in \{0,\dots,m\}.$ Consider the grid constructed by $n$-th Cartesian product of the set $\{\tilde x_0,\dots,\tilde x_m\}$ and the corresponding tessellation $\{Q_j\}_{j=1,\dots,m^n}$ of $E:=[-1,1]^n$ made of $n$-dimensional parallepipeds. Let $\mathcal F^{1,r},\dots,\mathcal F^{N,r}$, with $N=\binom{n}{k}m^k(m+1)^{n-k}$, be the collection of all the $k$-dimensional faces of the parallepipeds of the tessellation, with no repetitions. Any $k$-face $\mathcal F^{(s,r)}$ is of the form 
$$\mathcal F^{(s,r)}=\Big\{x\in E: x_{\alpha(j,s)}=\tilde x_{i(j,s)},\,j=1,\dots,n-k,\, \tilde x_{i(j,s)}\leq x_{\alpha(j,s)}\leq \tilde x_{i(j,s)+1},\,j=n-k+1,\dots,n\Big\}\,,$$
where $\alpha(\cdot,s)$ is a permutation of the set $\{1,2,\dots,n\}$ and $i(j,s)\in\{0,\dots,m-1\}$ for any $j=1,2,n$, and $s=1,\dots, N.$ The tangent space to such a $k$-face is then spanned by $e_{\alpha(n-k+1,s)},\dots,e_{\alpha(n,s)}.$ Therefore we can give a parametrization and an orientation to $\mathcal F^{(s,r)}$ by setting, for each $ j=1,\dots,n$ and $s=1,\dots,N$,
\begin{align*}
x^{(s,r)}_{\alpha(j,s)}=&\tilde x_{i(j,s)}\\
A^{(s,r)}=&[\pm e_{\alpha(n-k+1,s)},\dots,\pm e_{\alpha(n,s)}]\\
\Omega^{(s,r)}=&\otimes_{j=n-k+1}^n[0,\tilde x_{i(j,s)+1}-\tilde x_{i(j,s)}]\,,
\end{align*}
where signs are suitably chosen in order to have $\mathcal F^{(s,r)}=\{x^{(s,r)}+A^{(s,r)}y,\,y\in \Omega^{(s,r)}\}\subset E.$

Now, recalling that for the $n$ dimensional cube $E$ one has 
$$d_E(x,y):=\max_{i\in\{1,\dots,n\}}|\arcos(x_i)-\arcos(y_i)|,$$
it is immediate to verify that $\mathcal T^{(r)}=\{(x^{(s,r)},A^{(s,r)},\Omega^{(s,r)}), s=1,\dots,N\}$ satisfies Eq. \eqref{FE-A1B} with $c=\sqrt{\binom{n}{k}}$ and Eq. \eqref{FE-A2B} with $h=\pi/(2m)$. Hence, due to Theorem \ref{th:FEB}, $\{\mathcal T^{(r)}\}$ is an \mesh of constant $C=\sqrt{\binom{n}{k}}/\cos(\frac{\pi r}{2m}).$ 
\end{example}
\begin{remark}
In the nodal framework admissible polinomial meshes $\{\mathcal T^{(r)}\}_{r\in \N}$  such that $\Card \mathcal T^{(r)}\sim \ddim \mathscr P_r$ are termed \emph{optimal}, see \cite{Kroo11,Piazzon16,Kroo19,DaPr24}. It is worth noticing that the same property holds for the integral admissible mesh constructed in Example \ref{ex:baransquare} above; this also justifies its title.
\end{remark}

\begin{remark}\label{rmk:comparecardinality}
In order to fairly compare the cardinality of the mesh constructed in Example \ref{ex:baransquare} with that constructed in Example \ref{ex:markovsquare} (reported in Figure \ref{fig:markovsquare} for $c_1=1/2$), we need to set $h$ as the same constant of the mesh, i.e.,
$$C=\frac{\sqrt{\binom{n}{k}}}{1-c_1}=\frac{\sqrt{\binom{n}{k}}}{\cos(\frac{\pi r}{2m})},$$
that leads to $m=\lceil3r/2\rceil$. For such a choice we have $M(r)\sim\binom{n}{k}\frac{3^k}{2^n}r^k(3r+2)^{n-k}$. The comparison of the cardinality of the two families, reported in in Figure \ref{fig:comparecard} for the case $n=3$, $k=2$, shows that this latter strategy yields sensibly smaller sets.
\end{remark}

\begin{figure}[H]
\caption{Comparison of the cardinality of the mesh constructed in Example \ref{ex:baransquare} with the one constructed in Example \ref{ex:markovsquare} in the case of $c_1=1/2$, $n=3$, and $k=2$. As a reference we depict also the curve $r\mapsto \ddim \forms{r}{k}.$ }\label{fig:comparecard}
\begin{center}
\includegraphics[scale=0.6]{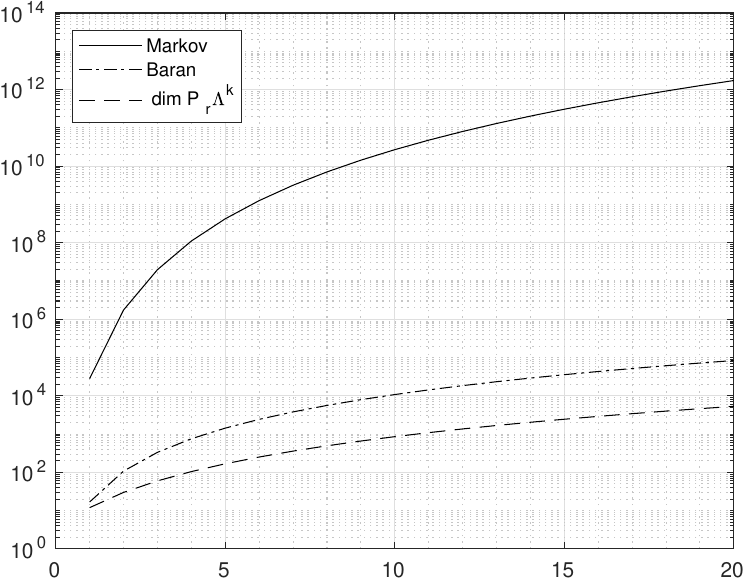}
\end{center}
\end{figure}

\subsection{Admissible integral $k$-meshes for the $2$-simplex and the $3$-simplex by Baran Inequality}\label{subsec:simplex}
We dedicate this subsection to the explicit construction of a low cardinality \mesh for the $n$-simplex, with $n=2$ or $3$, relying upon Theorem \ref{th:FEB}. This algorithmic construction can be likewise extended to the case $ n > 3 $, but requires a significantly heavier notation.

To this end, we consider $k$-faces of families of coordinate simplices, i.e. sets of the form
\begin{equation*}\label{eq:coordinatesimplex}
S(\tau,a,\ell):=\Big\{x\in \R^n:\,\tau_i(x_i-a_i)\geq 0,\,\sum_{i=1}^n \tau_i(x_i-a_i)\leq \ell\Big\},
\end{equation*}
and propose the following notation for the $k$-integral meshes we construct.
\begin{definition}\label{def:mymesh}
Let $\tilde {\mathcal T}_{k,m,n}$ denote
\begin{itemize}
\item[\underline{\textbf{For} $\boldsymbol{n=2}$}] 
\begin{itemize}
\item for $k=1$, the collection of edges of simplices $S(\tau,a,\ell)$, with $\tau=\{1,1\}$, $\ell=1/m$, and $a\in((m-1)\Sigma_n\cap \N^2)=\{(i,j)\in \N^2: i+j\leq m-1\}$
\item for $k=2$,  the collection of $2$-simplices of the form $S(\tau,a,\ell)$, with either $\tau=\{1,1\}$, $\ell=1/m$, and $a/\ell\in((m-1)\Sigma_n\cap \N^2),$ or $\tau=\{-1,-1\}$, $\ell=1/m$, and $(a+\tau)/\ell\in (m-1)\Sigma_n\cap \N^2.$
\end{itemize} 
\item[\underline{\textbf{For} $\boldsymbol{n=3}$}] \begin{itemize}
\item for $k<n$, the collection of $k$-faces of simplices $S(\tau,a,\ell)$, with $\tau=\{(-1)^{s_1},(-1)^{s_2},(-1)^{s_3}\}$, $(s_1+s_2+s_3)\equiv 0 (\mathrm{mod} 2)$, $\ell=1/m$, and $a/\ell\in (m-1)\Sigma_n\cap \N^3$
\item for $k=n$ the collection of the $n$-simplices introduced in the case $k<n$ and the collection of the tetrahedrons $T(a,\ell)$ of verices $a+(\ell,0,0)$, $a+(0,\ell,0)$, $a+(0,0,\ell)$,$a+(\ell,\ell,\ell)$, with $\ell=1/m$ and $a/\ell\in (m-1)\Sigma_n\cap \N^3$.
\end{itemize}
\end{itemize}
\end{definition}
Figure \ref{fig:elements} illustrates the elements of the tessellation proposed in Definition \ref{def:mymesh} for the case $n=3$.

\begin{figure}[h]
\caption{Elements used in Definition \ref{def:mymesh} in the case $n=3$. \emph{Left:} the four type of simplices that are always (i.e., $k=1,2,3$) considered. \emph{Right:} the tetrahedron that is taken into account only in the case $k=3$.} \label{fig:elements}
\begin{center}
\begin{tabular}{cc}
\includegraphics[width=0.49\textwidth]{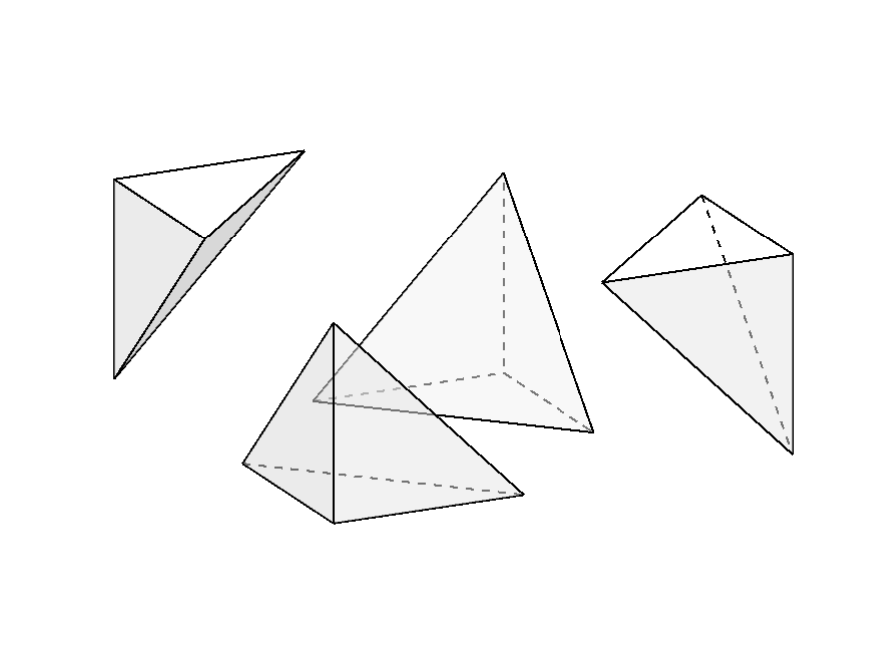}&
\includegraphics[width=0.39\textwidth]{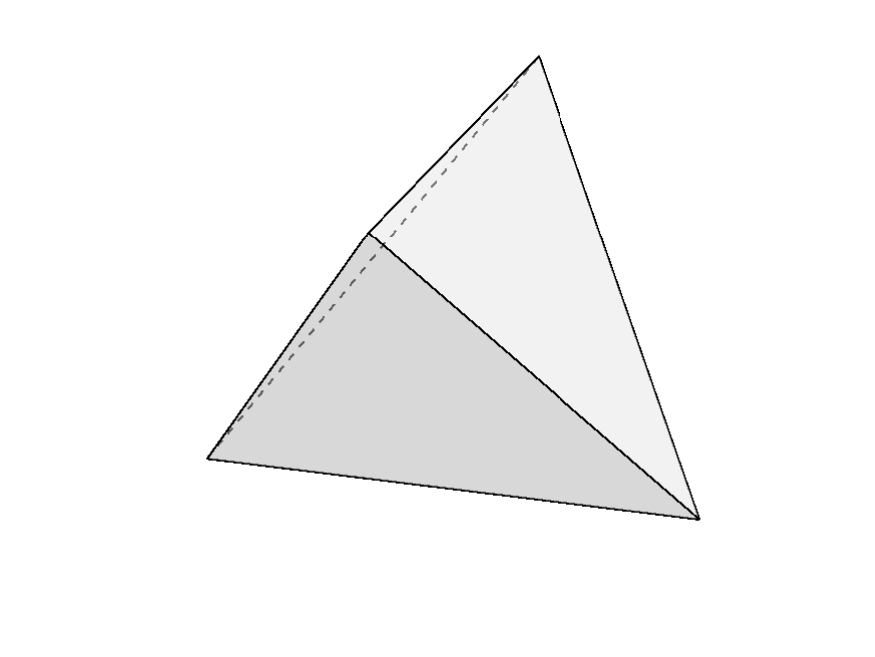}
\end{tabular}
\end{center}
\end{figure}

As shown in \cite{Miro92,LenNormShayne04}, the Baran distance on the standard $n$-dimensional simplex $\Sigma_n:=\{x\in \R^n: x_i\geq 0,\,\sum_{i=1}^nx_i\leq 1\}$ has the explicit form
\begin{equation*}
d_{\Sigma_n}(x,y)=2 \arcos\left(\phi(x),\phi(y)  \right),
\end{equation*}
where $\phi:\Sigma_n\rightarrow \mathbb S_{n}\cap \R_+^{n+1}$ is defined as
$$\phi(x_1,\dots,x_n)=\left(\sqrt{1-\sum_{i=1}^n x_i},\sqrt{x_1},\dots,\sqrt{x_n}\right) $$
and $d_{\Sigma_n}$ is the Carnot-Caratheodory distance induced by the Riemannian metric 
$$g_{\Sigma_n}(x):=\left[\begin{array}{ccccc}
x_1^{-1}     & 0&\dots& \dots &0\\
0& x_2^{-1}&0& \dots &0\\
\vdots  & \vdots    &\vdots&\vdots&\vdots\\
\vdots  & \vdots    &\vdots&\vdots&\vdots\\
0     & \dots&\dots& 0 &x_n^{-1}
\end{array}\right]+
\frac 1{1-\sum_{i=1}^n x_i}
\left[\begin{array}{ccccc}
1& 1&\dots& \dots &1\\
1& 1&\dots& \dots &1\\
1& 1&\dots& \dots &1\\
1& 1&\dots& \dots &1\\
1& 1&\dots& \dots &1
\end{array}\right].$$
defined on the interior of $\Sigma_n.$

\begin{remark} \label{rmk:volblowingup}
    The volume form $d\Vol_{\Sigma_n}=\frac 1{\sqrt{(1-\sum_{i=1}^nx_i)\prod_{i=1}^n x_i}}\de\haus^n(x)$ induced by $g_{\Sigma_n}$ blows up around the boundary of $\Sigma_n$. As a consequence, two points with fixed Euclidean distance present a large Baran distance as they approach the boundary of $\Sigma_n$. 
\end{remark}
Since we aim at constructing an integral $k$-mesh with maximal diameter of the elements of the tessellation bounded above by $h>0$, this justifies the following adaptive strategy for the development of Algorithm \ref{alg:meshthesimplex}:
\begin{itemize}
\item We compute a suitable $m$ such that the maximal diameter of elements of the tessellation defining  $\tilde {\mathcal T}_{k,m,n}$ is bounded above by $h$;
\item We add to our integral $k$-mesh all the $k$-faces of elements $E_\beta$ of this tessellation such that $E_\beta\cap \partial \Sigma_n\neq \emptyset$, i.e., elements with larger Baran diameter;
\item We iteratively repeat this procedure on the set until $\Sigma_n$ has been exhausted. Notice that in fact $\Sigma_n \setminus \cup_\beta E_\beta$ is still a simplex.
\end{itemize}
Such a procedure is formalized in Algorithm \ref{alg:meshthesimplex}, and graphically depticted in Figure \ref{fig:meshthesimplex}.
\begin{algorithm}[h!]
\caption{Simplex $k$-Mesh}\label{alg:meshthesimplex}
\begin{algorithmic}[1]
\Require $n\in\N,$ $k\in\{0,n\}$, $r\in \N$, $h\in]0,\pi/(2 r)[$
\State $c=0$, $s=1$, $T=\emptyset$, $p=1-\cos(h/2)$
\While{$s>0$}
\State $m \gets \left\lceil\frac{s}{p\left(2\sqrt{c/p}+1\right)}\right\rceil$
\State $\ell\gets s/m$
\State $\tilde T \gets c+s\tilde T_{k,m,n}$
\State $c\gets c+\ell$
\State $s\gets s -(n+1)\ell$
	\For{$E\in\tilde T$}
		\If{$E\not\subseteq c+s\Sigma_n$}
			\For{$F\in\{k\text{-faces of } E\}$}
				\State $T \gets T\cup F$
			\EndFor
		\EndIf
	\EndFor
\EndWhile
\Ensure $T$
\end{algorithmic}
\end{algorithm}
%
\begin{proposition}\label{prop:provealg}
Let $\mathcal T^{(r)}$ be the sequence of outputs of Algorithm \ref{alg:meshthesimplex} executed for each $r\in \N$ with $h:=\theta \pi/(2r)$ with $0<\theta<1$. Then $\mathcal T^{(r)}$ is an \mesh for the simplex with constant $C:=\frac {\sqrt{\binom{n}{k}}}{\cos(\theta\pi/2)}.$ 
\end{proposition}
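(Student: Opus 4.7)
The plan is to deduce Proposition \ref{prop:provealg} from the Fundamental Estimate by Baran Inequality (Theorem \ref{th:FEB}) applied to $\Sigma_n$, complemented by a direct cardinality count on the output of Algorithm \ref{alg:meshthesimplex}. I will verify the two hypotheses \eqref{FE-A1B} and \eqref{FE-A2B} at every $x\in\Sigma_n$ with constant $c=\sqrt{\binom{n}{k}}$ and parameter $h=\theta\pi/(2r)$; the theorem then delivers exactly $\|\omega\|_0\leq[\sqrt{\binom{n}{k}}/\cos(\theta\pi/2)]\|\omega\|_{\mathcal T^{(r)}}$ for every $\omega\in\forms{r}{k}$, which is the claimed constant $C$.

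\emph{Orientation decomposition.} Given $x\in\Sigma_n$, let $j$ be the iteration of the while loop that first removes $x$ from the current shell. Then $x$ lies in the closure of a boundary coordinate $n$-simplex $E\subset c_{j-1}\mathbf 1+s_{j-1}\Sigma_n$ of the tessellation described in Definition \ref{def:mymesh}. The $\binom{n}{k}$ coordinate $k$-faces of $E$ incident to its corner vertex $a$ are indexed by increasing multi-indices $\alpha$ of length $k$ and carry orientations $\pm e_{\alpha(1)}\wedge\ldots\wedge e_{\alpha(k)}$, forming an orthonormal basis of $\Lambda_k$. Choosing these $\binom{n}{k}$ currents as $\{T^{(s_1)},\ldots,T^{(s_{\binom{n}{k}})}\}$, every unit simple $k$-vector $\tau$ expands in this basis with Euclidean coefficient vector of norm one, so by Cauchy--Schwarz the associated $\ell^1$-norm is bounded by $\sqrt{\binom{n}{k}}$. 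Hence \eqref{FE-A1B} holds with $c=\sqrt{\binom{n}{k}}$.

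\emph{Baran diameter.} The core analytic step is the verification of \eqref{FE-A2B}, which reduces to bounding by $h$ the Baran diameter of each boundary coordinate $n$-simplex $E$ produced by the algorithm. Points of $E$ satisfy $x_i\geq c_{j-1}$ for every $i$, and two points of $E$ differ in each coordinate by at most $\ell_j=s_{j-1}/m_j$. Using $d_{\Sigma_n}(x,y)=2\arccos(\phi(x)\cdot\phi(y))$ and the spherical triangle inequality on $\mathbb S^n\cap\R^{n+1}_+$, the Baran distance splits as a sum of one-dimensional contributions of the form $2|\arcsin\sqrt{x_i}-\arcsin\sqrt{y_i}|$, each maximized in the worst case $\{x_i,y_i\}=\{c_{j-1},c_{j-1}+\ell_j\}$. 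A direct manipulation shows that the inequality
\begin{equation*}
2\bigl(\arcsin\sqrt{c_{j-1}+\ell_j}-\arcsin\sqrt{c_{j-1}}\bigr)\leq \frac{h}{n+1}
\end{equation*}
is implied, up to the slack built into the algorithm, by $\ell_j\leq p(2\sqrt{c_{j-1}/p}+1)$ with $p=1-\cos(h/2)$, which is precisely the constraint enforced by the ceiling $m_j=\lceil s_{j-1}/(p(2\sqrt{c_{j-1}/p}+1))\rceil$. Therefore every $k$-face $F\in\mathcal T^{(r)}$ has Baran diameter at most $h$, and since $x\in E$ we get $\max_{z\in F}d_{\Sigma_n}(x,z)\leq h<\pi/(2r)$, as required.

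\emph{Cardinality.} To see $\limsup_r[\Card\mathcal T^{(r)}]^{1/r}\leq 1$, analyze the recursion $c_j=c_{j-1}+\ell_j$, $s_j=s_{j-1}-(n+1)\ell_j$ with $\ell_j\sim p+2\sqrt{c_{j-1}p}$: this yields $c_j=O(j^2 p)$, so the while loop halts after $J=O(1/\sqrt p)=O(r)$ iterations, since $p=1-\cos(\theta\pi/(4r))\sim r^{-2}$. Each iteration adds at most $O((s_{j-1}/\ell_j)^n)=O(r^{2n})$ boundary coordinate $n$-simplices, each contributing $\binom{n+1}{k+1}$ $k$-faces, whence $\Card\mathcal T^{(r)}=O(r^{2n+1})$ grows polynomially in $r$ and the $r$-th root tends to one. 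The main obstacle of the argument is the Baran diameter estimate of the previous paragraph: the blow-up of $g_{\Sigma_n}$ at $\partial\Sigma_n$ (see Remark \ref{rmk:volblowingup}) forces the element size $\ell_j$ to be taken non-uniformly across layers, and this is precisely what motivates the shell-by-shell adaptive architecture of Algorithm \ref{alg:meshthesimplex}.
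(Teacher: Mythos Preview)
The overall architecture is right: verify \eqref{FE-A1B} and \eqref{FE-A2B} of Theorem \ref{th:FEB} and bound the cardinality. However, the central analytic step --- your Baran diameter estimate --- does not hold up.

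You claim that the spherical triangle inequality lets $d_{\Sigma_n}(x,y)$ ``split as a sum of one-dimensional contributions of the form $2|\arcsin\sqrt{x_i}-\arcsin\sqrt{y_i}|$''. No such splitting follows from the triangle inequality: changing one barycentric coordinate $x_i$ moves \emph{two} components of $\phi(x)$ (namely $\sqrt{x_i}$ and $\sqrt{1-\sum x_j}$), so the step distances are not the quantities you write. Worse, even if one granted the splitting, the implication you assert is false. You say $\ell_j\leq p(2\sqrt{c_{j-1}/p}+1)$ with $p=1-\cos(h/2)$ forces each piece to be $\leq h/(n+1)$; but at $c_{j-1}=0$ your bound gives only $\ell_j\leq p=2\sin^2(h/4)\approx h^2/8$, whereas $2\arcsin\sqrt{\ell_j}\leq h/(n+1)$ requires $\ell_j\leq \sin^2(h/(2(n+1)))\approx h^2/(4(n+1)^2)$, which is strictly smaller for $n\geq 2$. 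So the formula enforced by the algorithm does \emph{not} imply your coordinate-wise bound. What the inequality $\ell_j\leq p(2\sqrt{c_{j-1}/p}+1)$ is actually equivalent to is $(\sqrt{c_{j-1}+\ell_j}-\sqrt{c_{j-1}})^2\leq p$, i.e.\ $2\arccos\bigl(1-(\sqrt{c_{j-1}+\ell_j}-\sqrt{c_{j-1}})^2\bigr)\leq h$, and it is \emph{this} quantity that equals the Baran diameter of the elements. The paper obtains this identification not by any coordinate splitting but via Lemma \ref{lemmadiam}: $f(x,y)=\cos(d_{\Sigma_n}(x,y)/2)$ is concave in each argument, so the diameter of a convex polytope is attained at vertices, and a direct vertex computation yields the closed form \eqref{diambound}--\eqref{diambound2}. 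This concavity lemma is the missing idea in your argument.

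Two smaller points. First, in the orientation step you assume $x$ lies in a \emph{coordinate} $n$-simplex of the tessellation; for $n=2$, $k=1$ the up-triangles in Definition \ref{def:mymesh} do not cover $\Sigma_2$, and for $n=3$, $k=3$ one must also treat the tetrahedra $T(a,\ell)$. The paper allows $S$ to be any element whose $k$-faces lie in $\mathcal T^{(r)}$ and handles the tetrahedron case separately with $c=1$. Second, your cardinality bound $O(r^{2n+1})$ suffices for admissibility but is looser than necessary: since each iteration replaces the interior sub-tessellation by a coarser one, the total number of elements is dominated by the first iteration's count $\lceil 1/p\rceil^n=O(r^{2n})$.
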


Proposition \ref{prop:provealg} ensures that Algorithm \ref{alg:meshthesimplex} produces the desired admissible integral $k$-mesh. We present the steps of the proof as separate lemmata, and postpone the rest of the proof to the end of the section.

\begin{figure}[h!]
\caption{The costruction carried out by Algorithm \ref{alg:meshthesimplex}. At each step the elements lyining in the grey area are not taken into account, in the next step such region is re-meshed by simplices having sides of updated length.}\label{fig:meshthesimplex}
\begin{center}
\begin{tabular}{ccc}
\includegraphics[width=0.3\textwidth]{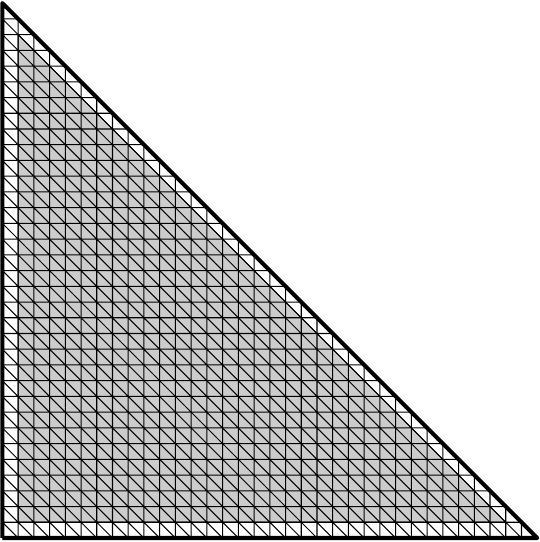}&
\includegraphics[width=0.3\textwidth]{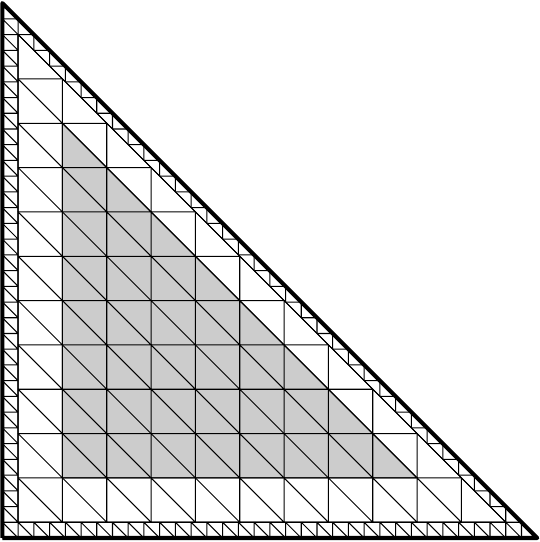}&
\includegraphics[width=0.3\textwidth]{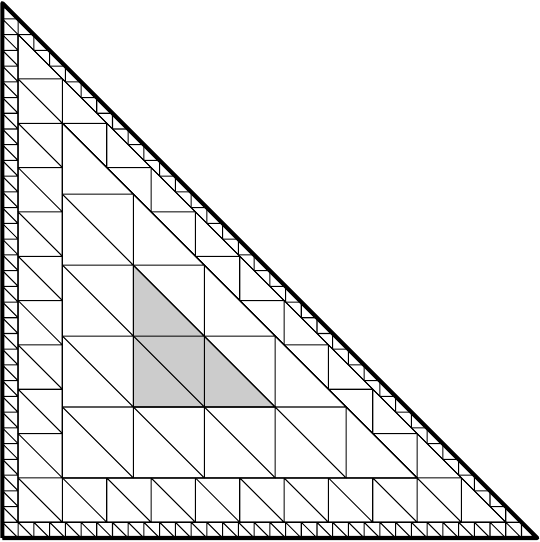}
\end{tabular}
\end{center}
\end{figure}

The first step we need to accomplish in order to prove Proposition \ref{prop:provealg} is to compute, for any $k$ and $r$, a suitable $m$ for which we can apply Theorem \ref{th:FEB} in which estimates \eqref{FE-A1B} and \eqref{FE-A2B} are written for the averaging currents defined by the $k$-faces of the element of the considered tessellation $\mathcal T_{k,m,n}$ that contains the test point $x\in \Sigma_n$. The following property of the Baran distance on the simplex will play a pivotal role in our construction.

\begin{lemma}\label{lemmadiam}
Let $f:\Sigma_n\times \Sigma_n\rightarrow R$ be defined as $f(x,y)=\cos( d_{\Sigma_n}(x,y)/2)$. Then the functions $f(\cdot,y)$ and $f(x,\cdot)$ are concave on $\Sigma_n$. In particular the Baran diameter of any convex polytope $\mathcal P\subset \Sigma_n$ is the maximum Baran distance of its vertices.
\end{lemma}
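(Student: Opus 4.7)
The plan is to exploit the explicit formula for the Baran distance on the simplex recalled just before the statement, namely $d_{\Sigma_n}(x,y) = 2 \arcos \langle \phi(x), \phi(y)\rangle$ with $\phi(x) = (\sqrt{1-\sum x_i}, \sqrt{x_1}, \dots, \sqrt{x_n})$. Applying $\cos(\cdot/2)$ to both sides and using $\cos \circ \arcos = \mathrm{id}$ on $[-1,1]$, one gets the clean identity
\begin{equation*}
  f(x,y) \;=\; \langle \phi(x), \phi(y) \rangle \;=\; \sqrt{\bigl(1 - {\textstyle\sum_i} x_i\bigr)\bigl(1 - {\textstyle\sum_i} y_i\bigr)} \;+\; \sum_{i=1}^n \sqrt{x_i\, y_i}\,,
\end{equation*}
which reduces everything to an elementary concavity check. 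This rewriting is the main idea; once in place, the rest is routine.

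For the first claim I fix $y \in \Sigma_n$ and write $f(x,y) = \sqrt{1-\sum_i y_i}\,\sqrt{1-\sum_i x_i} + \sum_{i=1}^n \sqrt{y_i}\,\sqrt{x_i}$. Each factor $\sqrt{y_i}$ and $\sqrt{1-\sum_i y_i}$ is a non-negative constant (since $y\in\Sigma_n$), while each factor $\sqrt{x_i}$ and $\sqrt{1-\sum_i x_i}$ is the composition of the concave, monotone square root with a non-negative affine function of $x$, hence concave on $\Sigma_n$. Therefore $f(\cdot,y)$ is a non-negative linear combination of concave functions, and is itself concave. Symmetry of the expression in $x$ and $y$ yields concavity of $f(x,\cdot)$ as well.

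For the second claim, note that $\arcos$ is strictly decreasing on $[-1,1]$, so maximizing $d_{\Sigma_n}(x,y)$ over $(x,y)\in \mathcal P\times \mathcal P$ is equivalent to minimizing $f(x,y)$ there. A standard fact (an immediate consequence of the definition of concavity) is that a concave function on a convex, compact polytope attains its minimum at an extreme point: for if $f(\cdot,y)$ is concave, then $f(tx_0+(1-t)x_1,y)\ge \min\{f(x_0,y),f(x_1,y)\}$, so iterating a decomposition into convex combinations of vertices of $\mathcal P$ shows the minimum is reached at some vertex. Applying this in the $x$-variable with $y$ fixed at an optimal point $y^\ast$ produces a vertex $v$ of $\mathcal P$ with $f(v,y^\ast) \le f(x,y^\ast)$ for all $x\in\mathcal P$; applying it once more in the $y$-variable with $x$ fixed at $v$ produces a vertex $w$ with $f(v,w)\le f(v,y)$ for all $y$. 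Hence the minimum of $f$ on $\mathcal P\times \mathcal P$, and thus the Baran diameter, is attained on a pair of vertices.

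The only subtlety I anticipate is remembering that concavity of $f$ is separate in each argument (the function is not jointly concave), but this suffices because the two-step ``freeze one variable, optimize the other'' argument only uses separate concavity. I do not foresee a real obstacle; the cleanness of the identity $f(x,y)=\langle\phi(x),\phi(y)\rangle$ is what makes the statement essentially one line of convex analysis after the rewriting.
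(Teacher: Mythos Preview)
Your proof is correct and follows the same overall structure as the paper's: both rewrite $f(x,y)=\langle\phi(x),\phi(y)\rangle$, establish separate concavity in each argument, and then use that a concave function on a convex polytope attains its minimum at an extreme point to locate the diameter at a pair of vertices.

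The one genuine difference is in how concavity is established. The paper computes the Hessian of $f(\cdot,y)$ explicitly and checks that the associated quadratic form is nonpositive. You instead observe that $f(\cdot,y)$ is a nonnegative linear combination of functions of the form $\sqrt{\ell(x)}$ with $\ell$ affine and nonnegative on $\Sigma_n$, each of which is concave; this bypasses the Hessian entirely. Your argument is shorter and more robust (it does not rely on differentiability in the interior, and it sidesteps a minor exponent typo in the paper's Hessian formula), while the paper's computation has the modest advantage of giving the Hessian explicitly should one need quantitative bounds later. For the diameter step the two proofs are essentially identical.
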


\begin{proof}
We show that $x\mapsto f(x,y)$ is convcave and the concavity of $y\mapsto f(x,y)$ will follow by simmetry. Indeed we can compute
$$\frac{\partial^2f}{\partial x_i\partial x_j}(x,y)=-\frac 1 4\left(\delta_{i,j}\sqrt{\frac{y_i}{x_i^{3}}}+ \frac{\sqrt{1-\sum_{h=1}^ny_h}}{\sqrt{1-\sum_{h=1}^nx_h}}  \right).$$
Therefore
\begin{align*}
\sum_{i,j=1}^n v_iv_j\frac{\partial^2f}{\partial x_i\partial x_j}(x,y)=&-\frac 1 4\left(\sum_{i=1}^n\sqrt{\frac{y_i}{x_i^{3}}}v_i^2+ \frac{\sqrt{1-\sum_{h=1}^ny_h}}{\sqrt{1-\sum_{h=1}^nx_h}}\sum_{i=1}^n\sum_{j=1}^n v_iv_j\right) \\
=&-\frac 1 4\left(\sum_{i=1}^n\sqrt{\frac{y_i}{x_i^{3}}}v_i^2+ \frac{\sqrt{1-\sum_{h=1}^ny_h}}{\sqrt{1-\sum_{h=1}^nx_h}}(\sum_{i=1}^n v_i)^2\right)\leq 0.
\end{align*}

Let us denote by $\diam_{\Sigma_n}$ the diameter with respect to the Baran distance. Notice that
\begin{align*}
\frac{\diam_{\Sigma_n}(\mathcal P)}2=&\max_{x\in \mathcal P} \max_{y\in \mathcal P}\arcos\left(\phi(x),\phi(y)  \right)=\max_{x\in \mathcal P} \arcos\min_{y\in \mathcal P}\left(\phi(x),\phi(y)  \right)=\arcos\min_{x\in \mathcal P} \min_{y\in \mathcal P}\left(\phi(x),\phi(y)  \right)\\
=& \arcos\min_{x\in \mathcal P} \min_{y\in \mathcal P}f(x,y)=\arcos\min_{x\in\mathcal P} \min_{y\in \extr \mathcal P}f(x,y)=\arcos\min_{x\in \extr \mathcal P} \min_{y\in \extr \mathcal P}f(x,y),
\end{align*}
where we denoted by $\extr \mathcal P$ the set of extremal points (hence vertices) of the convex polytope $\mathcal P$, which is indeed the set of vertices. Note that the last two equalities follow by the above proven concavity of $x\mapsto f(x,y)$ and from the concavity of $y\mapsto \min_{x\in \extr\mathcal P}f(x,y).$ \myqed
%
\end{proof}

We then give an asymptotic upper bound to the Baran diameter of the elements of a tessellation.

\begin{lemma}
Let $n=2$ or $n=3$ and let $\mathcal A$ be the set of the elements of the tessellation $\tilde{\mathcal T}(k,m,n)$ described in Definition \ref{def:mymesh}. Then
\begin{equation}\label{diambound}
\max_{A\in \mathcal A}{\diam}_{\Sigma_n}(A)=2\arcos(1-1/m)\,.
\end{equation} 
\end{lemma}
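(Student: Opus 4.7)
By Lemma \ref{lemmadiam}, for each $A\in\mathcal A$ the Baran diameter $\diam_{\Sigma_n}(A)$ equals the maximum Baran distance between pairs of its vertices, so \eqref{diambound} reduces to computing the minimum of $\phi(u)\cdot\phi(v)$ over all vertex pairs of elements of $\tilde{\mathcal T}(k,m,n)$ and taking $2\arcos$ of the result.

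I would begin by enumerating the combinatorial types of vertex pairs in the tessellation. For the coordinate simplices $S(\tau,a,\ell)$ of Definition \ref{def:mymesh}, any vertex pair is either a \emph{short} pair $(a,\, a+\ell\tau_j e_j)$ (differing in a single coordinate by $\pm\ell$) or an \emph{opposite-diagonal} pair $(a+\ell\tau_i e_i,\, a+\ell\tau_j e_j)$ with $i\ne j$ (two coordinates with opposing sign increments). For the central tetrahedron $T(a,\ell)$ appearing only when $n=k=3$, one additionally gets \emph{parallel-diagonal} pairs $(a+\ell e_i,\, a+\ell(1,1,1))$ in which two coordinates both increment by $+\ell$. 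Via the polarization identity $\phi(u)\cdot\phi(v)=1-\tfrac12|\phi(u)-\phi(v)|^2$ together with $(\sqrt{x}-\sqrt{y})^2=x+y-2\sqrt{xy}$, each such inner product can be written as
\begin{equation*}
\phi(u)\cdot\phi(v) \;=\; \sqrt{s(s-c\ell)} \;+\; \sum_{j\in J}\sqrt{a_j(a_j+\ell)} \;+\; (\text{terms linear in the remaining } a_i),
\end{equation*}
where $s=1-\sum_i u_i$ is the slack at $u$, $c\in\{1,2\}$ counts the coordinates in which $v$ exceeds $u$ by $\ell$, and $J$ indexes those coordinates.

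The minimization over admissible base positions $a$ would then rest on two elementary monotonicity facts: $t\mapsto\sqrt{t(t+\ell)}-t$ is nondecreasing on $[0,+\infty)$ and vanishes at $t=0$, while $s\mapsto s-\sqrt{s(s-c\ell)}$ is nonincreasing on $[c\ell,+\infty)$ and attains its maximum $c\ell$ at $s=c\ell$. Together these push each relevant $a_j$ to zero and the slack $s$ to its smallest admissible value. For a short pair ($c=1$) a direct computation yields $\phi(u)\cdot\phi(v)=1-\ell$, attained at the boundary configuration $\sum_i a_i=1-\ell$, $a_j=0$, whose image under $2\arcos$ is exactly the claimed $2\arcos(1-1/m)$.

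The main obstacle is the parallel-diagonal family from the central tetrahedron, where the slack term becomes $\sqrt{s(s-2\ell)}$ and can in principle drive $\phi(u)\cdot\phi(v)$ down to $1-2\ell$ at extreme base positions such as $a=((m-3)/m,0,0)$ with $\ell=1/m$. Ruling these out requires a careful exploitation of the admissibility constraint $a/\ell\in(m-1)\Sigma_n\cap\N^n$ from Definition \ref{def:mymesh} together with the requirement that all four vertices of $T(a,\ell)$ lie in $\Sigma_n$, in order to pin down the smallest admissible slack and show that no parallel-diagonal pair realizes a value below $1-\ell$. Once this is settled, collating the minima over all pair types yields \eqref{diambound}.
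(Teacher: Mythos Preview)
Your reduction via Lemma \ref{lemmadiam} to vertex pairs is the same starting point as the paper's, and your monotonicity argument for the ``short'' pairs is fine. The two proofs diverge at the tetrahedron. The paper does not try to bound the parallel-diagonal pairs at all: it simply observes that every pair of vertices of a central tetrahedron $T(a,\ell)$ is already a pair of vertices of some coordinate simplex $S(\tau,a',\ell)$ belonging to the same tessellation, so the tetrahedra can be discarded from the maximum without loss. After that, the paper reduces to extremal base positions $a$ by invoking once more the concavity of $a\mapsto\min_{x,y\in S(\tau,a,\ell)}f(x,y)$ (a direct consequence of Lemma \ref{lemmadiam}), and carries out an explicit computation at one corner, appealing to symmetry for the remaining $\tau$.

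Your proposed route through admissibility does not close the gap. The base position $a=((m-3)/m,0,0)$ satisfies $a/\ell\in(m-1)\Sigma_3\cap\N^3$ for every $m\ge 3$, and all four vertices of $T(a,\ell)$ lie in $\Sigma_3$ (the apex has coordinate sum exactly $1$); so no constraint in Definition \ref{def:mymesh} excludes it, and the pair $u=a+\ell e_1$, $v=a+\ell(1,1,1)$ really gives $\phi(u)\cdot\phi(v)=1-2\ell$. More importantly, once you follow the paper and pass from tetrahedra to simplices, this very pair reappears as $(a'+\ell\tau_2 e_2,\,a'+\ell\tau_3 e_3)$ in the simplex $S((-1,1,-1),a',\ell)$ with $a'=((m-2)/m,0,1/m)$, which exposes a second gap in your classification: for $\tau$ with mixed signs the increment $\ell\tau_j e_j-\ell\tau_i e_i$ has \emph{same}-sign entries whenever $\tau_i\tau_j=-1$, so your ``opposite-diagonal'' class does not cover all simplex vertex pairs in $n=3$, and precisely misses the ones where the slack term becomes $\sqrt{s(s-2\ell)}$. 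To complete the argument along the paper's lines you must treat each admissible $\tau$ separately, restrict $a$ to those base positions for which $S(\tau,a,\ell)\subset\Sigma_n$, and run the concavity reduction on that restricted set.
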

\begin{proof}
First, notice that, for the case of tessellations containing also tetrahedrons, we do not need to take them into account. Indeed, due to Lemma \ref{lemmadiam}, if $\mathcal P$ is a tetrahedron, its diameter is the distance of its most separated vertices, but, for any pair $(v_1,v_2)$ of such vertices, there exists a simplex $S$ of the same tessellation such that $u$ and $v$ are vertices of $S$. Hence, removing $\mathcal P$ from the maximization in \eqref{diambound} does not change the value of the maximum diameter.

Now, consider the elements $A\in \mathcal A(\tau)$ being simplices with the same $\tau$. Then we have
\begin{align*}
&\max_{A\in \mathcal A(\tau)}{\diam}_{\Sigma_n}(A)=\max_{m a\in(m-1)\Sigma_n\cap \N^n }{\diam}_{\Sigma_n}S(\tau,a,1/m)\\
=&2\arcos\big(\min_{m a\in(m-1)\Sigma_n\cap \N^n }\min_{x,y\in S(\tau,a,1/m)}f(x,y)\big)\\
=&2\max_{a\in\extr[((m-1)\Sigma_n\cap \N^n)/m] } \arcos\big(\min_{x,y\in S(\tau,a,1/m)}f(x,y)\big)\\
=&2\max_{a\in\extr[((m-1)\Sigma_n\cap \N^n)/m] } \arcos\big(\min_{x,y\in\extr S(\tau,a,1/m)}f(x,y)\big),
\end{align*}
where in the last two lines we used the fact that $a\mapsto \min_{x,y\in S(\tau,a,1/m)}f(x,y)=\min_{x,y\in S(\tau,0,1/m)}f(x+a,y+a)$ is concave, which is a straightforward consequence of Lemma \ref{lemmadiam}.

Then equation \eqref{diambound} is obtained by direct computation for each $\tau$ and using the simmetry of the problem. In order to clarify the procedure we report only the computation for the case $\tau=(1,\dots,1)$, $a=0$, since the other cases are completely analogous. In such a case one has
\begin{align*}
&\min_{x,y\in\extr S(\{1,\dots,\},0,1/m)}f(x,y)=\min\left\{\min\{f(0,e_i/m)\},\min\{f(e_i/m,e_j/m),i\neq j\}  \right\}\\
=&\min\left\{
\left\|
\begin{pmatrix}
\sqrt{1-1/m}\\
0\\
\vdots\\
0\\
\sqrt{1/m}\\
0\\
\vdots
\end{pmatrix}
\right\|^2\,,\,
\left(
\begin{pmatrix}
\sqrt{1-1/m}\\
0\\
\vdots\\
\sqrt{1/m}\\
0\\
0\\
\vdots
\end{pmatrix},
\begin{pmatrix}
\sqrt{1-1/m}\\
0\\
\vdots\\
0\\
\sqrt{1/m}\\
0\\
\vdots
\end{pmatrix}
\right)
\right\}\\
=&\min\{1,(1-1/m)\}=1-1/m\,,
\end{align*} 
and the claim is proved. \myqed
\end{proof}

Completely analogous computations can be used to bound the maximum Baran diameter of the elements of the similarly defined tessellation of certain subsimplices, giving the following result.
\begin{corollary}
Let $0<c<1/(n+1)$, $n=2$ or $n=3$, and let $\mathcal A$ be the set of the elements of the tessellation $(1-c(n+1))\tilde{\mathcal T}(k,m,n)$ of $(1-c(n+1))\Sigma_n$, with $\tilde{\mathcal T}(k,m,n)$  described in Definition \ref{def:mymesh}. Then
\begin{equation}\label{diambound2}
\max_{A\in \mathcal A}{\diam}_{\Sigma_n}(A)=2\arcos\left( 1-\left(\sqrt{c+\frac{s}{m}}-\sqrt{c}\right)^2   \right)\,,
\end{equation}
where $s=1-(n+1)c.$ 
\end{corollary}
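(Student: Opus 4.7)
The plan is to mimic the proof of the preceding lemma, tracking the effect of the translation by $c\mathbf{1}$ and the scaling by $s=1-(n+1)c$ on each vertex of each element of the tessellation. Reading the tessellation as the natural inner copy $c\mathbf{1}+s\tilde{\mathcal T}(k,m,n)$ of the sub-simplex $c\mathbf{1}+s\Sigma_n$ used in Algorithm \ref{alg:meshthesimplex}, each element takes the form $c\mathbf{1}+s\cdot A$ for some $A\in\tilde{\mathcal T}(k,m,n)$. By Lemma \ref{lemmadiam} the Baran diameter of any such element equals $2\arcos$ of the minimum of $f$ over its vertex pairs, and the concavity argument from the preceding lemma (the map $a\mapsto\min_{x,y\in c\mathbf{1}+s\cdot S(\tau,a,1/m)}f(x,y)$ is concave in $a$ because $f$ is jointly concave and the minimum of concave functions along an affine family remains concave in $a$) forces the overall maximum to be achieved by a simplex sitting at a corner of the sub-simplex. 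By symmetry it suffices to treat $c\mathbf{1}+s\cdot S((1,\ldots,1),0,1/m)$, whose vertices are $c\mathbf{1}$ and $c\mathbf{1}+(s/m)e_i$ for $i=1,\ldots,n$; the tetrahedra arising when $k=n$ can be discarded exactly as in the preceding lemma, since any two of their vertices already appear as a vertex pair of some simplex of the same tessellation.

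Next I would evaluate $f$ on the two relevant vertex-pair patterns using $\phi(x)=\bigl(\sqrt{1-\sum_j x_j},\sqrt{x_1},\ldots,\sqrt{x_n}\bigr)$, which gives
\begin{align*}
f_1 &:= f\bigl(c\mathbf{1},\,c\mathbf{1}+(s/m)e_i\bigr)=\sqrt{(1-nc)(1-nc-s/m)}+(n-1)c+\sqrt{c(c+s/m)},\\
f_2 &:= f\bigl(c\mathbf{1}+(s/m)e_i,\,c\mathbf{1}+(s/m)e_j\bigr)=1-2c-\tfrac{s}{m}+2\sqrt{c(c+s/m)},\quad i\ne j.
\end{align*}
The algebraic identity $f_2 = 1-(\sqrt{c+s/m}-\sqrt c)^2$ is immediate, which already matches the target formula; specializing to $c=0$ returns the bound $1-s/m$ of the preceding lemma, which is a useful sanity check.

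The main obstacle is the comparison $f_2\le f_1$, which certifies that $f_2$ is the genuine minimum over all vertex pairs. Setting $u=\sqrt c$, $v=\sqrt{c+s/m}$, $w=\sqrt{1-nc-s/m}$, $W=\sqrt{1-nc}$, one has $v^2-u^2=W^2-w^2=s/m$, hence $v-u=(s/m)/(v+u)$ and $W-w=(s/m)/(W+w)$. Substituting yields
\begin{equation*}
f_1-f_2 = w(W-w)-u(v-u) = \tfrac{s}{m}\Bigl(\tfrac{w}{W+w}-\tfrac{u}{v+u}\Bigr),
\end{equation*}
whose sign coincides with that of $wv-uW$, equivalently $w^2v^2-u^2W^2 = (1-nc-s/m)(c+s/m)-c(1-nc)$. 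A direct expansion reduces this to $(s/m)\bigl(1-(n+1)c-s/m\bigr)=(s/m)\cdot s(1-1/m)\ge 0$, using $s=1-(n+1)c$. Thus $f_1\ge f_2$, and taking $2\arcos f_2$ delivers the bound claimed in \eqref{diambound2}.
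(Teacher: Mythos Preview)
Your proposal is correct and follows precisely the route the paper indicates (the paper only states that ``completely analogous computations'' yield the result and gives no proof). You carry out those computations in full, including the one genuinely new step relative to the $c=0$ lemma: the verification that $f_2\le f_1$, which you reduce cleanly to the nonnegativity of $(s/m)\,s(1-1/m)$.
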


\begin{proof}[of Proposition \ref{prop:provealg}]
Pick any $r\in \N$ and $h<\frac{\pi}{2r}$. It is clear by construction and by Eq. \eqref{diambound} and Eq. \eqref{diambound2} that Algorithm \ref{alg:meshthesimplex} will provide an integral $k$-mesh $\mathcal T^{(r)}$ such that for any $x\in \Sigma_n$, we can find either simplex or a tetrahedron $S$ with $\diam_{\Sigma_n}(S)\leq h$, such that all its $k$-faces are in $\mathcal T^{(r)}$, and $x\in S.$ Hence the condition \eqref{FE-A2B} of Theorem \ref{th:FEB} is satisfied.

If $S$ is a coordinate simplex, then it is clear that \eqref{FE-A1B} of Theorem \ref{th:FEB} is satisfied as well with $c=\sqrt{\binom{n}{k}}.$ If $S$ is one of the tetrahedrons described in Definition \ref{def:mymesh}, hence we are considering the case $k=n$, we can take $c=1$, since in such a case $\Lambda^k\cong\R$, all $k$-vectors are in particular simple and the integer $m$ in \eqref{FE-A1B}  is $1$.

Now we need to provide an asymptotic upper bound for the cardinality of $\mathcal T^{(r)}$ as $r\to +\infty$. Notice that the sequence of real numbers $\ell$ computed by Algorithm \ref{alg:meshthesimplex} is non decreasing. At the first iteration, Algorithm \ref{alg:meshthesimplex} starts from a tessellation of the simplex made of simplices of side $\ell_1=1/\lceil 1/p\rceil$, hence it considers 
$$M_1=\frac{\leb^n(\Sigma_n)}{\leb^n(\ell_1\Sigma_n)}=\ell_1^{-n}= \lceil 1/p\rceil^n$$
simplices. Then, at each iteration the sub-tessellation covering the simplex $(1-(n+1)\sum_{l=1}^{j-1}\ell_j)\Sigma_n$ is replaced by a tessellation made by simplices of larger side, i.e., the considered tessellation has smaller cardinality. Therefore we have
$$\Card \mathcal T^{(r)}\leq \binom{n+1}{k}M_1=\binom{n+1}{k} \lceil 1/p\rceil^n,$$
where $\binom{n+1}{k}$ is the number of $k$-faces of an $n$ simplex.
One finally has
$$\lceil 1/p\rceil^n\sim \left(\frac 1{1-\cos \frac h 2}\right)^n=\left(\frac{1+\sqrt{\frac{1+\cos h}2}}{1-\frac 1 2-\frac{\cos h}2}\right)^n\sim \left(\frac 2 {h^2}\right)^n=\left(\frac{2}{\theta^2\pi^2}r^2\right),$$
as $r\to +\infty.$
Since the set $\mathcal T^{(r)}\subset \averaging(\Sigma_n)$ satisfies conditions \eqref{FE-A1B} and \eqref{FE-A2B} of Theorem \ref{th:FEB}, and has a cardinality that grows polynomially with respect to $r$, the sequence $\{\mathcal T^{(r)}\}$ is an \mesh for $\Sigma_n.$ \myqed
\end{proof}

\begin{remark}
We stress that the asymptotic bound for the cardinality of the output of Algorithm \ref{alg:meshthesimplex} provided in the proof of Proposition \ref{prop:provealg} is extremely rough. The corresponding integral $k$-mesh in fact presents a cardinality which increases proportionally to $ \log(r) \ddim \forms{r}{k}$, as depicted in Fig. \ref{fig:cardsimplex}. In the nodal case $k=0$, meshes satisfying such a cardinality growth rate are termed \emph{quasi-optimal}, while meshes with the cardinality growing as a multiple of $\ddim \forms{r}{k}$ are termed \emph{optimal}, see, e.g., \cite{DaPr24,Kroo11,Piazzon16}. 
\end{remark}
\begin{figure}
\caption{\emph{Left:} cardinalities $\Card \mathcal T^{(r)}$ of the \mesh constructed by Algorithm \ref{alg:meshthesimplex} (with $n=2$, $k=1$, $\theta=2/3$, so that $h=\theta\pi/(2 r)$ has the property $1/\cos(hr)=2$) compared with the growth of $r^2\log(r)$. \emph{Right:} the ratio $(\Card \mathcal T^{(r)})/ (r^2\log(r)).$}\label{fig:cardsimplex} 
\begin{center}
\begin{tabular}{cc}
\includegraphics[width=0.45\textwidth]{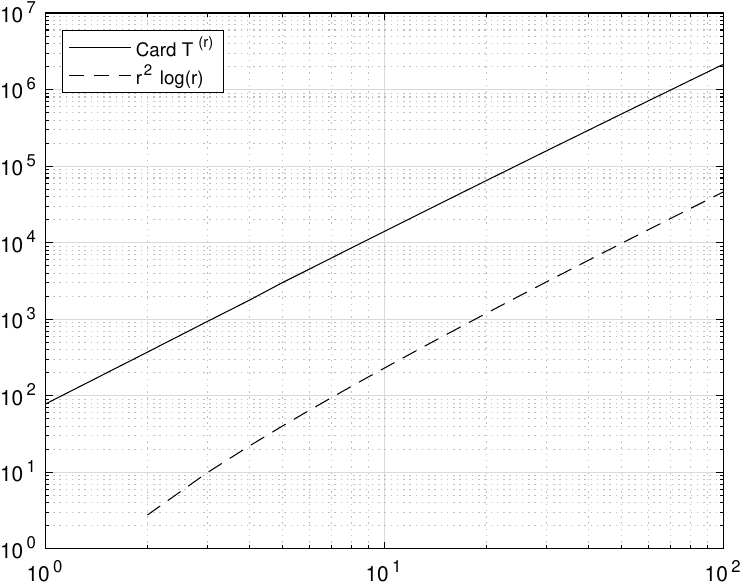}&
\includegraphics[width=0.45\textwidth]{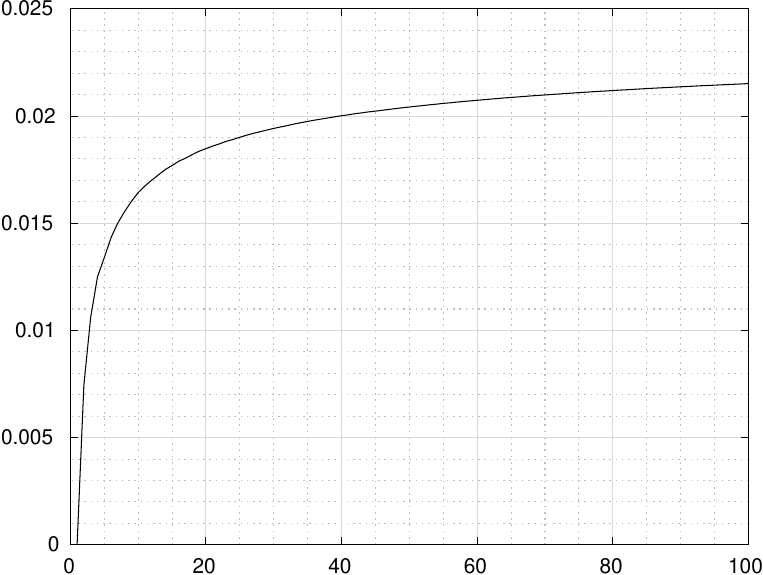}
\end{tabular}
\end{center}
\end{figure}



\section{Error estimates for least squares fitting on admissible integral $k$-meshes} \label{sec:5}

\subsection{Fitting of a set of currents}
Let $ \spaceV$ be a $N$-dimensional linear subspace of $ \testforms(E) $, $\mathcal T:=\{T_1,\dots, T_M\}\subset \currents(E)$. We term $\mathcal T$ \emph{determining} for $\spaceV$ whenever the condition $T(\omega)=0$, $\forall T\in \mathcal T$, implies $\omega=0$; note that necessarily we have $\Card \mathcal T\geq N$ in such a case. 
%
For any $\spaceV$-determining set $\mathcal T:=\{T_i\}_{i=1}^M\subset \averaging(E)$, and a positive vector of weights $w\in \R_{>0}^M$ we introduce the scalar product $(\cdot \, , \cdot)_{\mathcal T,w}$ on $\spaceV$ by setting
\begin{equation}\label{scalarproduct}
(\omega,\theta)_{\mathcal T,w}:=\sum_{i=1}^Mw_iT_i(\omega)T_i(\theta)\,.
\end{equation}
Note that $(\omega,\omega)_{\mathcal T,w}=\sum_{i=1}^M(\sqrt{w_i}T_i(\omega))^2=0$ implies $T_i(\omega)=0$ for all $i=1,\dots, M$ and thus $\omega=0$ since $\mathcal T$ is determining. Also, the right hand side of \eqref{scalarproduct} naturally extends to a non-negative simmetric bilinear form on $\testforms(E)$, thus $\|\omega\|_{\mathcal T,w}:=(\omega,\omega)_{\mathcal T,w}$ is a seminorm on $\testforms(E)$ (and in particular a norm on $\spaceV$). Therefore, the weighted discrete least squares projector $P_{\mathcal T,w}:\testforms(E)\rightarrow\mathscr{V} (E) $, 
\begin{equation}\label{discreteleastsquares}
P_{\mathcal T,w}\omega:= \argmin\{\|\omega-\theta\|_{\mathcal T,w}^2 ,\;\theta\in \mathscr{V} (E) \},\quad \forall \omega\in \testforms(E), 
\end{equation} 
is then well-defined due to Pythagorean Theorem. In particular, we can compute an orthonormal basis $\{\eta_1,\dots,\eta_N\}$ of $\mathscr{V} (E) $ and write
\begin{equation} \label{eq:samplingProjector}
P_{\mathcal T,w}\omega=\sum_{h=1}^N(\omega,\eta_h)_{\mathcal T,w}\eta_h.
\end{equation} 
Let us recall that the Lebesgue constant $\L$ of the fitting problem is defined by
\begin{equation}\label{Lebesgueconstant}
\L(\spaceV,\mathcal T,w):=\sup_{T\in \averaging(E)}\sum_{i=1}^{M}\left|\sum_{h=1}^NT_i(\eta_h)T(\eta_h)\right|w_i\,.
\end{equation}
where $\eta_h$'s are any $(\cdot,\cdot)_{\mathcal T,w}$-orthonormal basis of $\spaceV$. This quantity is a tight upper bound for the norm of the projection $P_{\mathcal T,w}$. Indeed we always have
\begin{equation*}
\opnorm{P_{\mathcal T,w}}:=\sup_{\omega\in \testforms(E)\setminus 0}\frac{\|P_{\mathcal T,w}\omega\|_0}{\|\omega\|_0}\leq \L(\spaceV,\mathcal T,w),
\end{equation*}
and equality holds when $\haus^k(\support T_i \cap \support T_j)=0$ for distinct indices $i$ and $j$, \cite{BrPi25}. Therefore, in the present framework, the classical Lebesgue inequality reads as
\begin{equation}
\label{LebesgueIneq}
\|\omega-P_{\mathcal T,w}\omega\|_0\leq (1+\L(\spaceV,\mathcal T,w))\inf_{\theta\in \spaceV}\|\omega-\theta\|_0\,.
\end{equation}

\subsection{Error estimates revisited}\label{sec:5.2}
A linear approximation scheme for elemants of $\testforms(E)$ can be set by picking an increasing sequence $\{\spaceVr\}_{r\in \N}$ of subspaces of $\testforms(E)$ such that $\cup_{r\in \N}\spaceVr$ is dense in $\testforms(E)$, i.e. $\min_{\theta\in \spaceVr}\|\omega-\theta\|_0\to 0$ as $r\to +\infty$ for any $\omega\in \testforms(E)$, and a sequence $\{\mathcal T^{(r)}\}_{r\in \N}$ of finite subsets of $\currents(E)$ such that, for any $r\in \N$, $\mathcal T^{(r)}=\{T^{(1,r)},\dots,T^{(M(r),r)}\}$ is a determining set for $\spaceVr$. Indeed, in such a setting we can consider the sequence $\{P^{(r)}\}_{r\in \N}$ of projection operators induced by weighted discrete least squares fitting on $\mathcal T^{(r)}$, that is
\begin{equation}\label{Pr}
P^{(r)}:=P_{\mathcal T^{(r)},w^{(r)}},
\end{equation}
where $w^{(r)}=\{w^{(1,r)},\dots,w^{(M(r),r)}\}$ is any positive finite sequence. 

The convergence analysis of this approximation scheme can be carried starting from the Lebesgue Inequality \eqref{LebesgueIneq}, to prove an upper bound for $\L(\mathcal T^{(r)},\spaceVr,w^{(r)})$ being the first key step in this direction.
The next proposition provides such a bound in the case of $\{\mathcal T^{(r)}\}_{r\in \N}$ being a $\{\spaceVr\}_{r\in \N}$-admissible integral $k$-mesh, while a sharper result in a similar direction is presented in Proposition \ref{corrollaryfitting} below. In order to state and prove these results it is convenient to introduce the following quantity
$$\kappa(y):=\|y\|_1\|(1/y_1,\dots, 1/y_{M})^t\|_\infty,\;\;\forall y\in]0,+\infty[^M\,.$$

\begin{proposition}\label{prop:errorestimatefittingmesh}
Let $E$ be a real body, let $\{\spaceVr\}_{r\in \N}$ be an increasing sequence of subspaces of $\testforms(E)$. Let $\{\mathcal T^{(r)}\}$ be a $\{\spaceVr\}_{r\in \N}$-admissible integral $k$-mesh of constant $C$. Let $w^{(s,r)}>0$ for any $s=1,\dots, M(r):=\Card \mathcal T^{(r)}$, $r\in \N$, and denote by $P^{(r)}$ the fitting operator defined in \eqref{Pr}. Then one has
\begin{equation*}
\L(\mathcal T^{(r)},\spaceVr,w^{(r)})\leq C\sqrt{\kappa(w^{(r)})}\ddim \spaceVr.
\end{equation*} 
Hence the following error estimate holds true for any $\omega\in \testforms(E)$:
\begin{equation}\label{standardfittingestimate}
\|\omega-P^{(r)}\omega\|_0\leq \left(1+ C\sqrt{\kappa(w^{(r)})}\ddim \spaceVr \right) \inf\{\|\omega-\theta\|_0,\,\theta\in \spaceVr\}\,.
\end{equation}
\end{proposition}
\begin{proof}
Recall that 
\begin{equation*}
\opnorm{P}\leq \L(\mathcal T^{(r)},\spaceVr,w^{(r)})= \sup_{T\in \averaging(E)}\sum_{s=1}^{M(r)}\left| \sum_{h=1}^NT^{(s,r)}(\eta_h)T(\eta_h)\right|w_i^{(r)}\,,
\end{equation*}
where $N:=\ddim \spaceVr$, see \cite[Th. 1]{BrPi25}. Using the sampling property of admissible integral $k$-meshes, we derive
\begin{align*}
\L(\mathcal T^{(r)},\spaceVr,w^{(r)})=&\leq C \sum_{s=1}^{M(r)}\max_{j\in\{1,\dots,M(r)\}}\left| T^{(j,r)}\left(\sum_{h=1}^NT^{(s,r)}(\eta_h)\eta_h\right)\right|w^{(s,r)}\\
\leq& C\sum_{s=1}^{M(r)}\max_{j\in\{1,\dots,M(r)\}}\left(\sum_{k=1}^N|T^{(j,r)}(\eta_k)|^2\right)^{1/2}\left( \sum_{h=1}^N|T^{(s,r)}(\eta_h)|^2\right)^{1/2}w^{(s,r)}\\
=& C\max_{j\in\{1,\dots,M(r)\}}\left(\sum_{k=1}^N|T^{(j,r)}(\eta_k)|^2\right)^{1/2}\sum_{s=1}^{M(r)}\left( \sum_{h=1}^N|T^{(s,r)}(\eta_h)|^2\right)^{1/2}w^{(s,r)}\\
\leq& C\max_{j\in\{1,\dots,M(r)\}}\left(\sum_{k=1}^N|T^{(j,r)}(\eta_k)|^2\right)^{1/2}\left(\sum_{s=1}^{M(r)} \sum_{h=1}^N|T^{(s,r)}(\eta_h)|^2w^{(s,r)}\right)^{1/2}\|w^{(r)}\|_1^{1/2}\\
\leq & \frac{C\|w^{(r)}\|_1^{1/2}}{\sqrt{\min_{s=1,\dots,M(r)}w^{(s,r)}}}\left(\sum_{j}^{M(r)}\sum_{k=1}^NT_j^2(\eta_k)w^{(j,r)}\right)^{1/2}\left( \sum_{h=1}^N\|\eta_h\|_{\mathcal T^{(r)},w^{(r)}}\right)^{1/2}\\
= & C\sqrt{\kappa(w^{(r)})}\left(\sum_{k=1}^N\sum_{j}^{M(r)}|T^{(j,r)}(\eta_k)|^2w_j^{(r)}\right)^{1/2}\sqrt{N}\\
=&C\sqrt{\kappa(w^{(r)})}N .
\end{align*}
The first claim is proved. The second claim follows from the first via \eqref{LebesgueIneq}. \myqed
\end{proof}
\begin{remark}[Fitting with equal weights]
In the case of equal weights, i.e. $w^{(r)}=const \cdot(1,1,\dots,1)$, one has $\kappa(w^{(r)})=M(r)$. Thus, \eqref{standardfittingestimate} simplifies to
$$\|\omega-P^{(r)}\omega\|_0\leq \left(1+C\sqrt{M(r)}\ddim\spaceVr)\right)\inf_{\theta\in \spaceVr}\|\omega-\theta\|_0.$$
Note that the value $M(r)$ is actually the global minimum of $\kappa$. However, we stress that, though equal weights strategy optimizes the upper bound \eqref{standardfittingestimate} of the Lebesgue constant, equal weights do not necessarily minimize the Lebesgue constant itself. A related discussion my be found in \cite{Piazzon19}.  
\end{remark}
 
Following the lines of \cite[Thm. 2]{CalviLevenberg08}, it is possible to obtain a much sharper result:
\begin{proposition}
\label{corrollaryfitting}
In the setting of Proposition \ref{prop:errorestimatefittingmesh}, one has
\begin{equation}\label{CL08estimate1}
\|P^{(r)}\omega\|_0\leq C\left(\|\omega\|_0+ \sqrt{\kappa(w^{(r)})}\right) \inf_{\theta\in \spaceVr}\|\omega-\theta\|_0,\quad \forall \omega\in \testforms(E),
\end{equation}
and
\begin{equation}\label{CL08estimate2}
\|\omega-P^{(r)}\omega\|_0\leq \left(1+C\left(1+ \sqrt{\kappa(w^{(r)})}\right)\right) \inf_{\theta\in \spaceVr}\|\omega-\theta\|_0,\quad \forall \omega\in \testforms(E)\,.
\end{equation}
\end{proposition}
\begin{proof}
Let us pick, for any $\epsilon>0$, $\theta_\epsilon\in \spaceVr$ such that 
$$\|\omega-\theta_\epsilon\|_0\leq \inf_{\theta\in \spaceVr}\|\omega-\theta\|_0+\epsilon\,.$$
Notice that, for any $r\in \N$ and $s\in\{1,\dots,M(r)\}$ one has
\begin{align*}
&|T^{(j,r)}(\omega)-T^{(j,r)}(P^{(r)}\omega)|\leq \frac 1{\sqrt{\min_{k=1,\dots,M(r)}w^{(k,r)}}}\|\omega-P^{(r)}\omega\|_{\mathcal T^{(r)},w^{(r)}}\\
\leq & \frac 1{\sqrt{\min_{k=1,\dots,M(r)}w^{(k,r)}}}\|\omega-\theta_\epsilon\|_{\mathcal T^{(r)},w^{(r)}}\\
\leq & \frac {\sqrt{\|w^{(r)}\|_1^{1/2}}}{\sqrt{\min_{k=1,\dots,M(r)}w^{(k,r)}}}\max_{l=1,\dots,M(r)}|T^{(l,r)}(\omega-\theta_\epsilon)|\\
\leq&\sqrt{\kappa(w^{(r)})}\|\omega-\theta_\epsilon\|_0\to \sqrt{\kappa(w^{(r)})}\inf_{\theta\in \spaceVr}\|\omega-\theta\|_0\;\text{ as }\epsilon\to 0.
\end{align*}
Taking the maximum over $j\in\{1,\dots, M(r)\}$ we obtain
\begin{equation}
\label{CL-1}
\|\omega-P^{(r)}\omega\|_{\mathcal T^{(r)}}\leq \sqrt{\kappa(w^{(r)})}\inf_{\theta\in \spaceVr}\|\omega-\theta\|_0\,.
\end{equation}
Therefore, using the sampling property of the integral admissible $k$-meshes, we can write
\begin{align*}
\|P^{(r)}\omega\|_0\leq & C\|P^{(r)}\omega\|_{\mathcal T^{(r)}}\leq C\left(\|P^{(r)}\omega-\omega\|_{\mathcal T^{(r)}}+\|\omega\|_{\mathcal T^{(r)}}\right)\\
\leq& C\left(\sqrt{\kappa(w^{(r)})}\inf_{\theta\in \spaceVr}\|\omega-\theta\|_0+\|\omega\|_{0}\right)\,.
\end{align*}
This proves \eqref{CL08estimate1}. In order to prove \eqref{CL08estimate2}, one may write
$$\|\omega-P^{(r)}\omega\|_0\leq \|\omega-\theta_\epsilon\|_0+\|P^{(r)}(\omega-\theta_\epsilon)\|_0,$$
where $\theta_\epsilon$ is as above, and then apply \eqref{CL08estimate1} to the second term. \myqed
\end{proof}
From \eqref{CL08estimate2} one deduces the advantage of using in the design of an approximation scheme admissible integral $k$-meshes with a mild growth rate of the cardinality of $\mathcal T^{(r)}$ as $r\to +\infty$. Considering for instance normalized equal weights, i.e. $w^{(s,r)}\equiv 1/M(r)$, and polynomial forms $\spaceVr=\forms{r}{k}(E)$, one indeed has
$$\|\omega-P^{(r)}\omega\|_0\leq \left(1+C\left(1+ \sqrt{ M(r)}\right)\right) d_r(\omega,E )\sim 2\widetilde C\sqrt{M(r)}d_r(\omega,E ),\quad \forall \omega\in \testforms(E)\,,$$
where $d_r(\omega,E)=\inf_{\theta\in \forms{r}{k}(E)}\|\omega-\theta\|_0$.
This last estimate, for a \emph{quasi optimal} admissible integral $k$-mesh (i.e., $M(r)\sim \ddim \forms{r}{k}\log r  $) as the one constructed on the simplex by the Algorithm \ref{alg:meshthesimplex} of Section \ref{sec:4}, implies 
$$\|\omega-P^{(r)}\omega\|_0\sim 2 \widetilde C\sqrt{\ddim \forms{r}{k}\log r\,}d_r(\omega,E )\sim 2 \widetilde C\sqrt{\binom{n}{k}\binom{n+r}{n}  \log(r)} d_r(\omega,E),$$
while, for the case of an \emph{optimal} admissible integral $k$-mesh  (i.e., $M(r)\sim \ddim \forms{r}{k} $) as that of Example \ref{ex:baransquare}, the estimate \eqref{CL08estimate2} reads
\begin{equation}\label{erroronoptimalmesh}
\|\omega-P^{(r)}\omega\|_0\sim 2 \widetilde C\sqrt{\ddim \forms{r}{k}\,}d_r(\omega,E )\sim 2 \widetilde C\sqrt{\binom{n}{k}\binom{n+r}{n}  } d_r(\omega,E)\,,
\end{equation}
see Remark \ref{rmk:comparecardinality}. 

We remark that in such cases the convergence of $P^{(r)}\omega\to \omega$ with respect to the zero norm easily follows under additional mild assumptions on $\omega$ and $E$. For instance, if the multivariate Jackson Inequlality \cite{Pl09} holds on $E$, and $\omega$ has $\mathscr C^k$ coefficients for some $k>n/2$, then \eqref{erroronoptimalmesh} implies that $\|\omega-P^{(r)}\omega\|_0\sim r^{n/2-k}\to 0.$


\section{\textquotedblleft Good\textquotedblright\ unisolvent arrays of currents extracted from admissible integral $k$-meshes}\label{sec:interp}

A unisolvent set $\mathcal T$ for the linear space $\spaceV\subset \testforms(E)$ is a determining set $\mathcal T\subset \currents(E)$ that achieves the minimal cardinality $\Card \mathcal T=N:=\ddim \spaceV$. In such a case the operator $P_{\mathcal T,w}$ introduced in \eqref{eq:samplingProjector} becomes independent by $w$ and reduces to the interpolation operator $\Pi_{\mathcal T}$, i.e., $\Pi_{\mathcal T}\omega $ is the unique element of $\spaceV$ such that $T_i(\Pi_{\mathcal T})=T_i(\omega)$, $i=1,2,\dots,N$.

 Indeed, in such a setting there exists a Lagrange basis $\omega_1,\dots,\omega_N$ of $\spaceV$, i.e. such that $T_i(\omega_j)=\delta_{i,j}$, and one has
$$\Pi_\mathcal T(\omega):=\sum_{i=1}^N T_i(\omega)\omega_i\,.$$
 
 Note that, in such interpolatory case , the right hand side of Eq. \eqref{Lebesgueconstant} simplifies to 
 $$\L(\spaceV,\mathcal T)=\sup_{T\in \averaging(E)}\sum_{i=1}^N|T(\omega_i)|,$$
 where $\omega_i$'s form the aforementioned Lagrange basis, and the Lebesgue Inequality \eqref{LebesgueIneq} holds with this definition of $\L$.

\subsection{Extremal sets of currents of Fekete and Leja type}\label{sec:5.1}
Let us assume that $\{\spaceVr\}_{r\in \N}$ is an increasing sequence of linear subspaces of $\testforms(E)$ with $\ddim \spaceVr=N(r)$ and, for any $r\in \N$, $\mathcal T^{(r)}:=\{T^{(s,r)}\}_{s=1}^{N(r)}\subset \currents(E)$ is a unisolvent set for $\spaceVr$. Then we can define the sequence of linear projection operators $\Pi^{(r)}:\testforms \rightarrow \spaceVr$ by setting
$$\Pi^{(r)}:=\Pi_{\mathcal T^{(r)}}\,.$$

We already pointed out that, in order to obtain the convergence $\|\omega-\Pi^{(r)}\omega\|_0\to 0$ for $\omega$ ranging in a reasonably large class of differential forms, it is mandatory to hold the growth of $\L(\spaceVr,\mathcal T^{(r)})$ compared to the decay of $\inf_{\theta\in \spaceVr}\|\omega-\theta\_0$. Arrays of points with sub-exponentially increasing Lebesgue constant are customarily termed \emph{good interpolation points} in the context of polynomial interpolation.

Already in the case $k=0$ and $n=1$ (i.e., nodal interpolation of univariate functions) and for the polynomial case (i.e., $\spaceV=\mathscr P_r$), the minimization of the Lebesgue constant among all possible set of interpolation nodes on a given compact set $E$ (the \emph{Lebesgue problem}) becomes unmanageable already for mild values of $r$. The quest for \emph{computable} sub-optimal solutions is thus of major interest in approximation theory. 

Fekete points are the most studied sub-optimal solution of the Lebesgue problem. A set of nodes $X:=\{x_1,x_2,\dots,x_N\}\subset E^N$ with $N:=\ddim \mathscr P_r$, is termed a set of \emph{Fekete points} for $E$ if it maximizes the determinant of the Vandermonde matrix $\vdm$:
\begin{equation}\label{eq:feketedef}
|\det \vdm(X,\mathcal B_r)|=\max_{Y\in E^N}|\det \vdm(Y,\mathcal B_r)|,\;\;\text{with }\vdm(Y,\mathcal B_r)_{i,j}=b_j(y_i),
\end{equation}
for one, and thus for all, basis $\mathcal B_r=\{b_1,\dots,b_N\}$ of $\mathscr P_r.$ Fekete problems in the segmental framework have also been studied \cite{BEFekete}.

Another proposed sub-optimal solution to the Lebesgue problem are Leja sequences. \emph{Leja sequences} are defined by means of the greedy version of the maximization procedure appearing in \eqref{eq:feketedef}. Precisely one first picks $x_1\in E$, then iteratively sets, for $ k\in \N$, 
\begin{equation}\label{eq:lejaseqdef}
x_{k+1}\in {\argmax}_{x\in E}|\det \vdm(\{x_1,\dots,x_k,x\},\{b_1,\dots,b_{k+1}\})|\,.
\end{equation} 
The first interest on Fekete points is easy to see: if $\{x_1,x_2,\dots,x_N\}$ are Fekete points, then we can write
$$\L(X,\mathscr P_r)=\max_{x\in E}\sum_{i=1}^N |\ell_i(x)|=\max_{x\in E}\sum_{i=1}^N \frac{|\det \vdm(X^{(i)}(x),\mathcal B_r)|}{|\det \vdm(X,\mathcal B_r)|}\leq \sum_{i=1}^N 1=N,$$
where we denoted by $\ell_i$ the $i$-th Lagrange polynomial and by $X^{(i)}(x)$ the set $X$ with the $i$-th element replaced by $x$. Hence Fekete points have a polynomially increasing Lebesgue constant.

The relationship between Lebesgue problem and the maximization of the Vandermonde determinant also bridges approximation theory with logarithmic potential theory when $n=1$ \cite{SaffTotik97,Ransford95} and pluripotential theory when $n>1$ \cite{Norm12}. 
This link is offered by the asymptotic of the optimal Vandermonde determinant as $r\to +\infty$, which is the \emph{transfinite diameter} of the set $E$:
\begin{equation}\label{transfinitediameter}
	\delta(E)=\lim_r\delta^{(r)} (E) := \left[ \max_{\boldsymbol x\in E^N} \Big| \vdm(\{x_1,\dots,x_N\},\mathcal B_r^{\text{mon}}) \Big| \right]^{1/(l_r)},
\end{equation}
where $l_r:=\sum_{j=1}^r[j(\ddim \mathscr P_j-\ddim \mathscr P_{j-1})],$ and $\mathcal B_r^{\text{mon}}$ is the monomial basis introduced in Section \ref{sect:polydf}. The existence of the limit in the definition of $\delta(E)$ is rather straightforward if $n=1$, while was proved for $ n > 1 $ by Zaharjuta \cite{Za75} with more complicated technologies. 
In one complex variable, $\delta(E)$ is the \emph{logarithmic capacity} of $E$, the quantity that distinguishes (from the potential-theoretic point of view) relevant sets from negligible ones. 
Arrays of interpolation nodes leading to the transfinite diameter of $E$ 
are termed \emph{asymptotically Fekete} and necessarily tend, in the weak$^*$ topology of measures, to $\mu_E$, the \emph{equilibrium measure} of the compact set $E$.

When $n>1$ the situation becomes more involved. Nevertheless, all the above mentioned relations among Fekete points, transfinite diameter, and suitably generalized versions of equilibrium measure, Green function, and capacity, have been extended replacing the potential-theoretic point of view by 
the framework of pluripotential theory. In particular, the sequence of uniform probability measures supported at Fekete points converges to $\mu_E$, the pluripotential equilibrium measure of $E$ \cite{BeBo10,BeBoNy11}.

A further step is required when the geometry is enriched and differential forms are considered. In \cite{BruPia24} authors introduced the (weighted) transfinite diameter of $E$ with respect to a real vector space $U$. For that, a choice of an orthonormal basis of $U$ is required. This definition can be specialized to the case of $U=\Lambda^k$ via the orthonormality introduced in Eq. \eqref{eq:samplingProjector}, leading to the following definition:
\begin{align}
\delta(E,\Lambda^k):=&\lim_r\delta^{(r)}(E,\Lambda^k)\notag\\
\delta^{(r)}(E,\Lambda^k):=&\max_{\mathcal T\in [\currents(E)]^{N(r)}, \Mass(T)=1}|\det\vdm(\mathcal T,\mathcal B_r^{\text{mon},k})|^{1/(\binom{n}{k}l_r)},\label{eq:transfinitediameterlambdak}
\end{align}
where $\Mass$ denotes the mass-norm defined in Subsection \ref{sec:2.2}, 
and $\mathcal B_r^{\text{mon},k}$ is the monomial basis introduced in Subsection \ref{sect:polydf}. The existence of the limit in equation \eqref{eq:transfinitediameterlambdak} is proved in \cite{BruPia24} in a weighted setting. Further, the weighted transfinite diameter with respect to $ \Lambda^k $ can be expressed as a geometric mean of standard weighted transfinite diameters, simplifying in the unweighted case to
\begin{equation*}\label{eq:equalityofdiameters}
\delta(E,\Lambda^k)=\delta(E).
\end{equation*}

With this at hand, we can extend the definition of Fekete points and asymptotically Fekete arrays.

\begin{definition}[Fekete currents] \label{def:Fekete}
Let $\mathcal{F}^{(r)}:=\{\mathcal{F}^{(1,r)},\dots,\mathcal{F}^{(N(r),r)}\}\subset \currents(E)$, $\Mass(\mathcal F^{(s,r)})=1$ for $s=1,\dots,N(r)=\ddim \forms{r}{k}$. Then $\{\mathcal{F}^{(1,r)},\dots,\mathcal{F}^{(N(r),r)}\}$ are termed \emph{Fekete currents} if
\begin{equation*}
|\det\vdm(\mathcal{F}^{(r)},\mathcal B_r^{\text{mom},k})|=[\delta^{(r)}(E,\Lambda^k)]^{\binom{n}{k}l_r  }.
\end{equation*} 
\end{definition}
The sequence $\{\widetilde {\mathcal{F}}^{(r)}\}_{r\in \N}$ is termed \emph{asymptotically Fekete} if 
\begin{equation} \label{eq:asymptoticallyfekete}
\lim_r|\det\vdm(\widetilde {\mathcal{F}}^{(r)},\mathcal B_r^{\text{mom},k})|^{1/\left(\binom{n}{k}l_r  \right)}=\delta(E).
\end{equation}
Since the asymptotics discussed above also hold for asymptotically Fekete arrays, see \cite{BlChLe92}, 
Leja sequences defined in \eqref{eq:lejaseqdef} can be similarly extended to the currents based setting. As for the case $k=0$, this definition depends on the ordering of the polynomial basis. In particular we need the following notion:
\begin{definition}[Lower triangular basis]\label{def:admissiblebasis}
Let $\mathcal Q=\{q_1,q_2,\dots\}$ be a basis for $\forms{}{k}$. We say that $\mathcal Q$ is lower triangular if for any $N\in \N$ there exist a lower triangular matrix $L^{(N)}$ such that 
\begin{equation*}\label{eq:admisssiblebasis}
q_i=\sum_{j=1}^{N} L^{(N)}_{i,j}b_j=\sum_{j=1}^{i} L^{(N)}_{i,j}b_j\,,
\end{equation*} 
where $\kmonomials{}{k}=\{b_1,b_2,\dots\}.$
\end{definition}
\begin{definition}[Leja sequences of currents] \label{def:Leja}
Let $\mathcal Q=\{q_1,q_2,\dots\}$ be a lower triangular basis of $\forms{}{k}=\cup_{r\in \N}\forms{r}{k}$ in the sense of Definition \ref{def:admissiblebasis}, and, for any $N\in \N$, denote by $\mathscr V_N$ the linear space $\Span\{q_1,\dots,q_N\}$. Let
$$L_1\in \argmax\{|T(q_1)|,\,T\in \currents(E), \Mass(T)=1\},$$
and let, for any $i\in \N$,
\begin{equation*}
L_{i+1}\in \argmax \{|\det\vdm(\{L_1,\dots,L_i,L\},\mathscr V_{i+1})|, \,L\in \currents(E),\;\Mass(L)=1\}\,.
\end{equation*}
The sequence $\{L_i\}$ is a Leja sequence for $\forms{}{k}$ relative to $\mathcal Q$.
\end{definition}
The problem of finding Fekete currents has some simplifications with respect to the Lebesgue problem. Nevertheless, it is still completely unfeasible from a computational point of view. Polynomial admissible mesh have been successifully employed to obtain a further simplification in the case of nodal interpolation of functions  \cite{BosCalviLevenbergSommarivaVianello11}.
Indeed,  one may replace, in Definition \ref{def:Fekete} and Definition \ref{def:Leja}, the domain of the maximization $\{T\in\currents(E): \Mass(T)=1\}$ with any admissible integral $k$-mesh. In such a case, we will refer to the corresponding currents as \emph{Fekete and Leja sequences extracted from the mesh}. Also in the context of the present work this approach is particularly profitable, as shown by the following theorem. 

\begin{theorem}\label{th:lebesgueforAFP}
Let $E\subset \R^n$ be a compact non-pluripolar set (see for instance \cite[\S 2.9]{Kl91}), $k\in\{1,\dots,n\}$ and let $\{\mathcal T^{(r)}\}$ be an admissible integral $k$-mesh for $E$ of constant $C$. Any sequence $\{\widetilde {\mathcal F}^{(r)}\}$ of Fekete or Leja currents extracted from the mesh $\{\mathcal T^{(r)}\}$ is asymptotically Fekete. In particular
\begin{equation} \label{eq:LebesgueFekete}
\L (\widetilde{\mathcal F}^{(r)},\forms{r}{k})\leq C \ddim \forms{r}{k}. 
\end{equation}
Hence $\left(\L (\widetilde {\mathcal F}^{(r)},\forms{r}{k})\right)^{1/r}\leq  \left(C\binom{n}{k}\binom{n+r}{r}\right)^{1/r}\sim \left(\frac{C}{(n-k)!k!}r^n\right)^{1/r}\to 1$ as $r\to +\infty$.
\end{theorem}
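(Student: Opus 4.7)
Both conclusions rest on a single preparatory lemma: if $\{\tilde\omega_i\}_{i=1}^N$ denotes the Lagrange basis of $\forms{r}{k}$ dual to $\widetilde{\mathcal F}^{(r)}$, with $N:=\ddim\forms{r}{k}$, then $\|\tilde\omega_i\|_0 \le C$ for every $i$. The plan to prove this is via Cramer's rule: $T(\tilde\omega_i)$ equals the ratio of the Vandermonde determinants obtained from $\vdm(\widetilde{\mathcal F}^{(r)},\cdot)$ by replacing the $i$-th row with $[T(b_j)]_j$. Fekete maximality of $\widetilde{\mathcal F}^{(r)}$ on the mesh forces this ratio to be $\le 1$ in absolute value for every $T \in \mathcal T^{(r)}$, so $\|\tilde\omega_i\|_{\mathcal T^{(r)}} \le 1$, and the admissibility inequality \eqref{constantofmeshHk} upgrades this to $\|\tilde\omega_i\|_0 \le C$.

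From here the Lebesgue bound \eqref{eq:LebesgueFekete} follows immediately via the triangle inequality applied to $\|\omega\|_0 = \sup_{T \in \averaging(E)} |T(\omega)|$ from Section \ref{sec:2.2}:
$$\L(\widetilde{\mathcal F}^{(r)},\forms{r}{k}) = \sup_{T\in \averaging(E)} \sum_{i=1}^N |T(\tilde\omega_i)| \le \sum_{i=1}^N \|\tilde\omega_i\|_0 \le NC,$$
and the ``in particular'' asymptotic follows from $N \sim r^n/((n-k)!k!)$. For the asymptotic Fekete property in the Fekete case, the plan is to compare $\widetilde{\mathcal F}^{(r)}$ against a true Fekete $N$-tuple $\mathcal F^{(r)} \in [\currentsone(E)]^N$. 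Expanding each $b_j \in \kmonomials{r}{k}$ in the Lagrange basis as $b_j = \sum_k \widetilde{\mathcal F}^{(k,r)}(b_j)\,\tilde\omega_k$ factors the Vandermonde as $\vdm(\mathcal F^{(r)},\kmonomials{r}{k}) = A\cdot\vdm(\widetilde{\mathcal F}^{(r)},\kmonomials{r}{k})$ with $A_{ik} = \mathcal F^{(i,r)}(\tilde\omega_k)$. The preparatory lemma together with $M(\mathcal F^{(i,r)})=1$ bounds $|A_{ik}|\le C$, so Hadamard's inequality gives $|\det A| \le (C\sqrt N)^N$. Combined with the trivial $|\det\vdm(\widetilde{\mathcal F}^{(r)})| \le |\det\vdm(\mathcal F^{(r)})|$ (since mesh currents lie in $\currentsone(E)$) and the asymptotics $l_r\sim r^{n+1}/((n+1)(n-1)!)$ and $N\log N = O(r^n\log r)$, taking $\binom{n}{k}l_r$-th roots will yield \eqref{eq:asymptoticallyfekete}.

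For a Leja sequence $\{L_i\}$ extracted from the mesh relative to the lower-triangular basis $\mathcal Q$, the plan is analogous but telescoping: the triangularity of $\mathcal Q$ factors the Vandermonde as $\prod_{i=1}^N L_i(R_i)$, where each residual $R_i \in \Span\{q_1,\dots,q_i\}$ is uniquely determined by the conditions $L_j(R_i)=0$ for $j<i$ together with a leading-coefficient normalization. The greedy Leja step maximizes $|L(R_i)|$ over the mesh, and admissibility bounds this maximum below by $\|R_i\|_0/C$, so the Leja Vandermonde trails the greedy Vandermonde run over $\currentsone(E)$ by at most a factor $C^{-N}$; since $N = O(r^n) = o(\binom{n}{k}l_r)$, this loss vanishes after taking $\binom{n}{k}l_r$-th roots. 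The main obstacle is precisely this telescoping step: unlike the Fekete case, no single global matrix comparison is available, so one must verify uniformly in $i$ that $R_i$ lies in $\forms{r}{k}$ (whence admissibility applies with the same constant $C$), and that the product of greedy maxima $\prod_i \|R_i\|_0$ really does approach the transfinite diameter at the rate $\delta(E)^{\binom{n}{k}l_r}$ in the limit.
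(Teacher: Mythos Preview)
Your argument for the Lebesgue bound \eqref{eq:LebesgueFekete} is identical to the paper's: Cramer's rule plus Fekete maximality on the mesh gives $\|\tilde\omega_i\|_{\mathcal T^{(r)}}\le 1$, and the sampling inequality \eqref{constantofmeshHk} yields $\|\tilde\omega_i\|_0\le C$.

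For the asymptotically Fekete property, the paper packages everything into a single chain
\[
|\det\vdm(\mathcal F^{(r)})|\;\ge\;|\det\vdm(\widetilde{\mathcal F}^{(r)})|\;\ge\;|\det\vdm(\widetilde L)|\;\ge\;\frac{1}{C^{N}}|\det\vdm(\mathcal F^{(r)})|,
\]
handling Fekete and Leja simultaneously by squeezing. Your route is slightly different: you treat the Fekete case directly via the factorization $\vdm(\mathcal F^{(r)})=A\cdot\vdm(\widetilde{\mathcal F}^{(r)})$ and Hadamard, obtaining the constant $(C\sqrt N)^N$ rather than $C^N$. This is harmless since $N\log N=o\!\left(\binom{n}{k}l_r\right)$, and is in fact the standard argument in the nodal literature you are generalizing.

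Your self-identified ``obstacle'' in the Leja case is not a real one. First, $R_i\in\Span\{q_1,\dots,q_i\}\subset\forms{r}{k}$ is immediate from the lower-triangular hypothesis on $\mathcal Q$, so admissibility applies with the same constant $C$ at every step $i\le N(r)$. Second, you do not need $\prod_i\|R_i\|_0$ to \emph{equal} the transfinite-diameter rate, only to dominate $|\det\vdm(\mathcal F^{(r)})|$ up to a subexponential factor. Since $\{R_1,\dots,R_N\}$ is unit-triangular with respect to $\mathcal Q$, one has $|\det\vdm(\mathcal F^{(r)},\kmonomials{r}{k})|=|\det[\mathcal F^{(i,r)}(R_j)]|$, and Hadamard over columns (each entry bounded by $\|R_j\|_0$) gives $|\det\vdm(\mathcal F^{(r)})|\le N^{N/2}\prod_j\|R_j\|_0$. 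Combined with your telescoping bound $\prod_j|\widetilde L_j(R_j)|\ge C^{-N}\prod_j\|R_j\|_0$, this yields $|\det\vdm(\widetilde L)|\ge (C\sqrt N)^{-N}|\det\vdm(\mathcal F^{(r)})|$, which is exactly what is needed. Alternatively, once you have the Fekete case, the Leja case follows by sandwiching $|\det\vdm(\widetilde L)|$ between $|\det\vdm(\widetilde{\mathcal F}^{(r)})|$ (above) and $(C\sqrt N)^{-N}|\det\vdm(\mathcal F^{(r)})|$ (below), with no need to invoke the transfinite diameter separately.
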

\begin{proof}
    Let $ \{ \mathcal{F}^{(r)} \} $ be any \emph{true} Fekete sequence of currents for $ E $. Let $ \{ \widetilde {\mathcal F}^{(r)} \} $ be the Fekete currents extracted from the mesh $ \mathcal{T}^{(r)} $ and $ \{ \widetilde L_i \} $ be the Leja sequence extracted from the same mesh. Using Definition \ref{def:Fekete} and Definition \ref{def:Leja} and the sampling property \eqref{constantofmeshHk}, one can write
    \begin{align*} |\det\vdm({\mathcal{F}}^{(r)},\mathcal B_r^{\text{mom},k})| & \geq |\det\vdm(\widetilde {\mathcal{F}}^{(r)},\mathcal B_r^{\text{mom},k})| \geq |\det\vdm(\{\widetilde L_1, \ldots, \widetilde L_{N(r)}\},\mathcal B_r^{\text{mom},k})| \\ 
    & \geq  \frac{1}{C^{N(r)}} |\det\vdm({\mathcal{F}}^{(r)},\mathcal B_r^{\text{mom},k})| .
    \end{align*}
    Taking the $ \binom{n}{k} l_r $-root of each term, one sees that the first and the last terms satisfy \eqref{eq:asymptoticallyfekete}, and so do all the intermediate terms.

    To prove the second part of the statement, consider the Lagrange basis $ \{ \omega_1, \ldots, \omega_{N(r)} \} $ associated with the set of currents $ \widetilde{\mathcal F}^{(r)} $. It follows from the definition of Fekete currents extracted from $ \mathcal{T}^{(r)}$ that
    $$ \Vert \omega_i \Vert_{\mathcal{T}^{(r)}} = \max_s \frac{\left| \det\vdm(\{ \widetilde F^{(1,r)}, \ldots,  \widetilde F^{(i-1,r)} , T^{(s,r)},  \widetilde F^{(i+1,r)}, \ldots,  \widetilde F^{(N,r)} \},\mathcal B_r^{\text{mom},k})\right| }{|\det\vdm({\widetilde {\mathcal{F}}}^{(r)},\mathcal B_r^{\text{mom},k})|} \leq 1 .$$
    Using the sampling property \eqref{constantofmeshHk} one gets $ \Vert \omega_i \Vert_0 \leq C \Vert \omega_i \Vert_{\mathcal{T}^{(r)}} $, whence taking the sum for $ i = 1, \ldots, N(r) $, the claim follows. \myqed
\end{proof}
\begin{remark}
To the best of the authors' knowledge, Theorem \ref{th:lebesgueforAFP} provides the first example of interpolation schemes based on integration of differential forms having \emph{Lebesgue constant of polynomial growth}. In particular, the classical Lebesgue estimate for interpolation in the setting of Theorem \ref{th:lebesgueforAFP} reads 
\begin{equation*}
\|\omega-\Pi^{(r)}\omega\|_0\leq \left(1+C \ddim \forms{r}{k}\right) d_r(\omega,E),
\end{equation*}
where $d_r(\omega,E):=\in\{\|\omega-\theta\|_0,\,\theta\in \forms{r}{k}\}$ is the error of best approximation. 
\end{remark}

The search of Fekete (or Leja) currents extracted from an integral admissible $k$-mesh instead of true Fekete (or Leja) currents suggested by Theorem \ref{th:lebesgueforAFP} moves the problem from the context of continuous optimization to the discrete one. However, the problem on the discrete level is still NP-hard \cite{Civril09}. For this reason, heuristic or stochastic approaches are generally pursued \cite{BosDeMarchiSommarivaVianello10,BosCalviLevenbergSommarivaVianello11}.  All the new objects introduced in the present paper have been suitably defined in order to extend to differential forms the constructions of \emph{approximate Fekete points} (AFP algorithm) or of \emph{discrete Leja points} (DLP algorithm) proposed in \cite{BosDeMarchiSommarivaVianello10}. These algorithms rest upon standard numerical linear algebra (QR and LU factorizations, respectively) whose efficiency and stability has deeply investigated. In particular, the AFP algorithm (paired with the so called twice-is-enough principle \cite{BoDeSoVi10}) experimentally demonstrates a great robustness, being capable of selecting sets of interpolation nodes with small Lebesgue constant even starting from Vandermonde matrices having a condition number comparable to the inverse of machine precision. 

An example of Fekete currents extracted by the AFP algorithm is depicted in Figure \ref{fig:AFPsimplex}.
\begin{remark}
Consistetly with the nodal case, the AFP and DLP algorithms provably extract collections of asymptotically Fekete currents for differential forms. This is easily achieved reproducing the proof of \cite[Thm. 1]{BosCalviLevenbergSommarivaVianello11}. In contrast, even for $ k = 0 $, it has not been proven that the algorithms output sequences of currents having sub-exponentially growing Lebesgue constant, i.e., $ \limsup_{r\to \infty} \left(\L (\widetilde {\mathcal F}^{(r)},\forms{r}{k})\right)^{1/r} \leq 1 $. Nevertheless, numerical experiment carried in the nodal framework suggest that such Lebesgue constants exhibit polynomial growth \cite{BoDeSoVi11b}.
\end{remark}

\begin{figure}[H]
\caption{Two examples of approximate Fekete currents extracted by the AFP algorithm from an admissible integral $1$-mesh on the unit $2$-simplex. \emph{Left: } $r=5$, $C=3$, $\L\approx 3.8$. \emph{Right: } $r=7$, $C=3$, $\L\approx 7$.}\label{fig:AFPsimplex}
\begin{center}
\begin{tabular}{cc}
\includegraphics[width=0.49\textwidth]{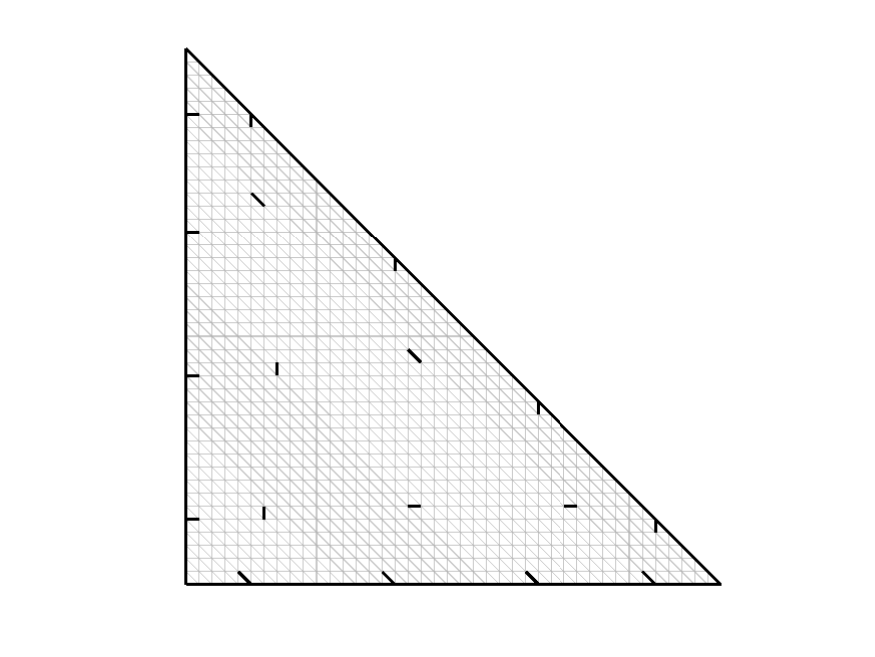}&
\includegraphics[width=0.49\textwidth]{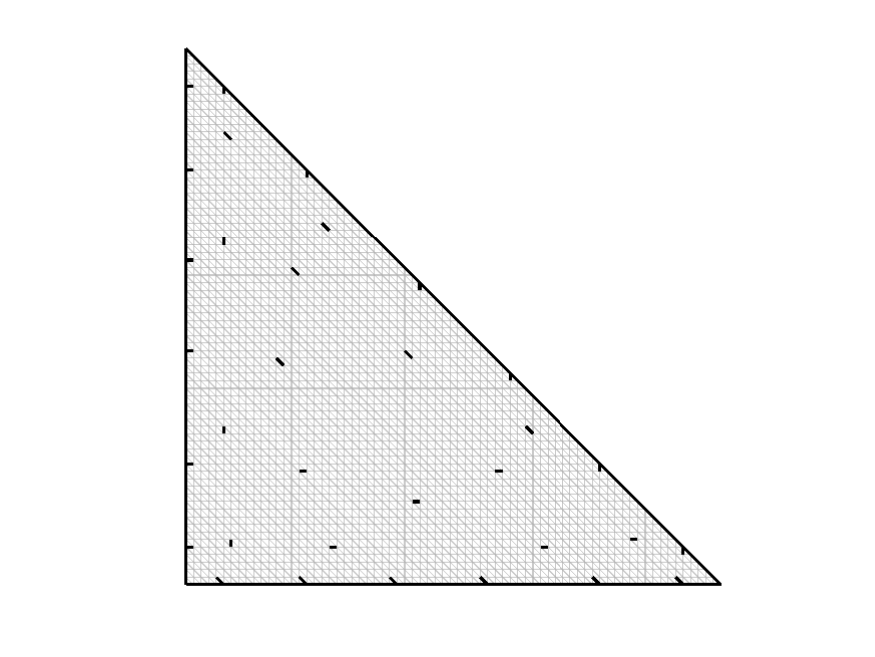}
\end{tabular}
\end{center}
\end{figure}

\bibliographystyle{plain}
\bibliography{bibliography}

\end{document}